\title{Tannaka--Kre\u{\i}n duality for Roelcke-precompact non-archimedean Polish groups}
\author{R\'emi Barritault}
\address{
  Universite Claude Bernard Lyon 1, ICJ UMR5208, CNRS, Ecole Centrale de Lyon, INSA Lyon, Université Jean Monnet, 69622 Villeurbanne, France.}
\email{barritault@math.univ-lyon1.fr}
\newcommand{\N}{\mathbb{N}}
\newcommand{\Q}{\mathbb{Q}}
\newcommand{\C}{\mathbb{C}}
\newcommand{\Sp}{\mathbb{S}}
\newcommand{\U}{\mathbf{U}}
\newcommand{\1}{\mathbbm{1}}
\DeclareMathOperator{\Aut}{Aut}
\DeclareMathOperator{\Homeo}{Homeo}
\DeclareMathOperator{\Hom}{Hom}
\DeclareMathOperator{\GL}{GL}
\DeclareMathOperator{\Sym}{Sym}
\renewcommand{\to}{\longrightarrow}
\renewcommand{\mapsto}{\longmapsto}
\DeclareMathOperator{\dom}{dom}
\DeclareMathOperator{\acl}{acl}
\DeclareMathOperator{\Ind}{Ind}
\newcommand{\M}{\mathcal{M}}
\newcommand{\B}{\mathcal{B}}
\newcommand{\Li}{\mathcal{L}}
\newcommand{\Hi}{\mathcal{H}}
\newcommand{\Ki}{\mathcal{K}}
\newcommand{\Un}{\mathcal{U}}
\newcommand{\A}{\mathcal{A}_G}
\newcommand{\spa}{\spec_{\A}}
\newcommand{\spec}{\mathbf{S}}
\newcommand{\triv}{\mathbf{1}}
\DeclareMathOperator{\FP}{F}
\DeclareMathOperator{\Hilb}{Hilb}
\newcommand{\Rep}{\mathbf{Rep}}
\newcommand{\rpna}{\mathbf{RPnA}}
\newcommand{\CR}{\Rep(\rpna)}
\newcommand{\Nat}{\mathcal{N}\hspace{-0,5mm}at}
\newcommand{\Hc}{\mathbf{H}}
\newcommand{\Pa}{\mathbf{P}}
\newcommand{\T}{\mathbf{T}}
\theoremstyle {plain}          \newtheorem{theo}{Theorem}[section]
\theoremstyle {plain}          \newtheorem{lem}[theo]{Lemma}
\theoremstyle {plain}          \newtheorem{coro}[theo]{Corollary}
\theoremstyle {plain}          \newtheorem{prop}[theo]{Proposition}
\theoremstyle {definition}     \newtheorem{defi}[theo]{Definition}
\theoremstyle {definition}     
\theoremstyle {definition}     \newtheorem{rem}[theo]{Remark}
\theoremstyle {definition}     
\theoremstyle {plain}          \newtheorem{fact}{Fact}
\theoremstyle {plain}          \newtheorem{theo'}{Theorem}
\begin{document}

\maketitle

\begin{abstract}
    Let $G$ be a Roelcke-precompact non-archimedean Polish group, $\B(G)$ the algebra of matrix coefficients of $G$ arising from its continuous unitary representations. The Gel'fand spectrum $\Hc(G)$ of the norm closure of $\B(G)$ is known as the \textit{Hilbert compactification} of $G$. Let $\A$ be the dense subalgebra of $\B(G)$ generated by indicator maps of open cosets in $G$. We prove that multiplicative linear functionals on $\A$ are automatically continuous, generalizing a result of Kre\u{\i}n for finite dimensional representations of topological groups. We deduce two abstract realizations of $\Hc(G)$. One is the space $\Pa(\M_G)$ of partial isomorphisms with algebraically closed domain of $\M_G$, the countable set of open cosets of $G$ seen as a homogeneous first order logical structure. The other is $\T(G)$ \textit{the Tannaka monoid} of $G$. We also obtain that the natural functor that sends $G$ to the category of its representations is full and faithful.
\end{abstract}

\tableofcontents

\section*{Introduction}

A fundamental question in abstract harmonic analysis is: How much information about a topological group can one recover from its representation theory ? In this paper, we are interested in cases where the group can be fully reconstructed.

More precisely, a \textit{unitary representation} of a topological group $G$ is a continuous group morphism $\pi$ from $G$ to the unitary group $\Un(\Hi_\pi)$ of a complex Hilbert space $\Hi_\pi$. Continuity means here that all the maps of the form $g \longmapsto \left<\pi(g)\xi,\eta\right>$ for $\xi, \eta \in \Hi_\pi$, called the \textit{matrix coefficients} of $\pi$, are continuous. $\pi$ is said \textit{irreducible} if $\Hi_\pi$ admits no non-trivial $G$-invariant closed subspace. In the well-behaved cases, every unitary representation of $G$ decomposes uniquely into an aggregate of irreducible subrepresentations. Harmonic analysis on such a group $G$ reduces to the study of its \textit{unitary dual} $\widehat{G}$, namely the collection of equivalence classes of irreducible unitary representations of $G$. In various subcases, duality theories have been established, allowing for the abstract reconstruction of $G$ from $\widehat{G}$. 

A fundamental instance of this is the case of locally compact abelian groups with the celebrated Pontryagin--van Kampen duality theory \cite{Pontrjagin_1934,Kampen_1935}. Indeed, if $G$ is locally compact and abelian, its irreducible representations always have dimension $1$ and $\widehat{G}$ can be identified with $\Hom(G, \Sp^1)$, the continuous group morphisms from $G$ to the unit circle. In particular, $\widehat{G}$ is also a group that is moreover locally compact and abelian. Finally, $G$ is canonically isomorphic to its bi-dual. 

Another context where representation theory is very tame is the compact case. Indeed, recall the Peter-Weyl Theorem which states, in particular, that every representation of a compact group splits in an essentially unique way as a sum of finite dimensional irreducible subrepresentations. This ideal situation allowed Tannaka and Kre\u{\i}n to develop, independently and with different approaches, duality theories for compact groups.

Kre\u{\i}n considered the algebra $\B'(G)$ generated by the matrix coefficients of a general topological group $G$ arising from its finite dimensional representations and proved the following technical but crucial result: positive linear functionals on $B'(G)$ are automatically continuous \cite{Krein_1941}. As a consequence, the \textit{Gel'fand spectrum} of $\B'(G)$ and that of its completion coincide. This is a compact topological group known as the \textit{Bohr compactification} $bG$ of $G$ and if $G$ is compact, then $G$ and $bG$ are canonically isomorphic.

Tannaka obtained similar duality results from a different perspective. He associated to a compact group $G$ a monoid $\T(G)$ of \textit{operations} on the class of representations of $G$. More explicitly, an element of $\T(G)$ is a family of operators $(u_\pi)_\pi$ where $\pi$ ranges over all the finite dimensional representations of $G$ and $u_\pi$ is an operator on the same the Hilbert space as $\pi$. Moreover, the family must commute with representation morphisms and preserve the common operations on representations: sum, tensor product and conjugation. The monoid law is pointwise composition. This structure can be endowed with a natural topology and is in a fact a compact group canonically isomorphic to $G$ \cite{Tannaka}.

The wider class of \textit{Roelcke-precompact} groups seems to retain a lot of the geometrical properties of compact groups. A topological group $G$ is \textit{Roelcke-precompact} if it is precompact in the Roelcke uniformity, in other words if for every open neighborhood $U$ of the identity, there exists a finite subset $F$ of $G$ such that $G = UFU$. Roelcke-precompact Polish groups are receiving an increasing amount of interest. Cameron started the investigation by carrying an extensive study of the dynamical properties of Roelcke-precompact permutation groups \cite{Cameron_1990}. Uspenksij \cite{Uspenskij_1998, Uspenskij_2001} and Glasner \cite{Glasner_2012} showed that well known groups, such as the unitary group of the separable Hilbert space or $\Aut(\mu)$ for an atomless standard Borel probability measure, are Roelcke-precompact and deduced strong properties such as minimality. \cite{Ibarlucia_2016} and the pair of papers \cite{BenYaacov_2016, BenYaacov_2018} studied compactifications of such groups. More recently, Ibarlucìa showed that Roelcke-precompact Polish groups have Kazdhan's Property (T) \cite{Ibarlucia_2021}, like compact group. Moreover, the Roelcke-precompact Polish groups that are \textit{non-archimedean}, i.e. that admit a basis of identity neighborhoods consisting of open subgroups, have seen their unitary representations fully classified \cite{Tsankov_2012} in a way that reminds a lot of the Peter-Weyl Theorem. Indeed, let $\M_G$ be the set of left translates of open subgroups of $G$. Then $G$ acts continuously on $\M_G$ seen as a discrete space and it gives rise to a unitary representation: $$\Lambda_G \colon G\curvearrowright \ell^2(\M_G).$$ 
This canonical construction captures all the representation theory of $G$, reminding of the left-regular representation of a compact group. More precisely, the following holds:

\begin{fact}\label{oligorep}
Let $G$ be a Roelcke-precompact non-archimedean Polish group.
\begin{enumerate}
    \item Every unitary representation of $G$ splits a sum of irreducible subrepresentations.
    \item Every irreducible unitary representation of $G$ is isomorphic to a subrepresentation of the representation $\Lambda_G$, which is faithful.
\end{enumerate}
\end{fact}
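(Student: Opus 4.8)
The plan is to construct every continuous representation out of the permutation representations $\ell^2(G/V)$, where $V$ ranges over the open subgroups of $G$; each of these is a subrepresentation of $\Lambda_G$ because $G/V$ is a $G$-invariant subset of $\M_G$, so $\ell^2(G/V) \hookrightarrow \ell^2(\M_G)$ equivariantly. Faithfulness of $\Lambda_G$ is then immediate and I would dispatch it first: since $G$ is Hausdorff and non-archimedean, for $g \ne e$ there is an open subgroup $V$ with $g \notin V$, whence $\Lambda_G(g)\delta_{V} = \delta_{gV} \ne \delta_{V}$. With this in hand, Part~(2) reduces to showing that every irreducible embeds into some $\ell^2(G/V)$, and Part~(1) to showing that every nonzero representation contains an irreducible subrepresentation (then a Zorn's-lemma exhaustion finishes).

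The first genuine step is to prove that for every continuous representation $\pi$ the space $\bigcup_V \Hi_\pi^V$ of vectors fixed by some open subgroup is dense, where $\Hi_\pi^V = \{\xi : \pi(v)\xi = \xi \ \forall v \in V\}$. I would do this directly from continuity, with no recourse to Roelcke-precompactness. Working in the $G$-invariant orthogonal complement $W$ of $\overline{\bigcup_V \Hi_\pi^V}$, suppose $W \ne 0$ and take a unit vector $\eta \in W$; by continuity there is an open subgroup $V$ with $\|\pi(v)\eta - \eta\| < 1/2$ for all $v \in V$. The norm-minimal element $\eta_V$ of the closed convex hull of the orbit $\{\pi(v)\eta : v \in V\}$ is $V$-fixed, by uniqueness of the minimizer and $V$-invariance of this convex set, and every point of that convex hull lies within $1/2$ of $\eta$, so $\|\eta_V\| \ge 1/2$. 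But $\eta_V \in W \cap \Hi_\pi^V = 0$, a contradiction. Hence $\bigcup_V \Hi_\pi^V$ is dense, and in particular every irreducible is \emph{$V$-spherical} for some open $V$.

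Roelcke-precompactness now enters in the crucial form: for every open subgroup $V$ the double coset space $V\backslash G/V$ is finite (choose finite $F$ with $G = VFV$). I would exploit this twice. Analytically, the commutant $\Hom_G(\ell^2(G/V),\ell^2(G/V))$ is spanned by $G$-invariant kernels on $G/V$, indexed by the finitely many double cosets; a Schur-test argument shows that precisely the kernels supported on the \emph{finite-index} double cosets $VgV$ (those with $[V : V \cap gVg^{-1}] < \infty$) are bounded, so this commutant is a \emph{finite-dimensional} $C^*$-algebra and $\ell^2(G/V)$ decomposes as a finite sum of irreducibles. Structurally, I would package the assignment $\pi \mapsto \Hi_\pi^V$ as a functor into finite-dimensional modules over a Hecke-type algebra, the operators being the compressions $\xi \mapsto P_V \pi(g) P_V \xi$, which depend only on the double coset $VgV$ and are therefore finite in number. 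Semisimplicity of a finite-dimensional $C^*$-algebra should then yield both halves: a minimal submodule of $\Hi_\pi^V$ gives rise to an irreducible subrepresentation (hence Part~(1) via density and exhaustion), and a $V$-spherical irreducible is matched to an irreducible constituent of $\ell^2(G/V)$ (hence Part~(2)).

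The crux, and the step I expect to be the main obstacle, is the functional-analytic reconstruction of an honest subrepresentation from abstract $V$-fixed data. Given a $V$-fixed unit vector $\xi$ in an irreducible $\pi$, the natural intertwiner $T\colon \ell^2(G/V) \to \Hi_\pi$, $\delta_{gV} \mapsto \pi(g)\xi$, is well defined on $G/V$ (using $V$-invariance of $\xi$), but its \emph{boundedness} is not formal: $G$ is not locally compact, so there is no Haar measure to average against, and this is exactly the point where the classical Kre\u{\i}n--Tannaka machinery breaks down. Concretely one must show that the spherical function $g \mapsto \langle \pi(g)\xi, \xi \rangle$ is supported on finite-index double cosets, equivalently that it lies in $\ell^2(G/V)$; this is where the finiteness of $V\backslash G/V$ together with the finite-dimensionality of $\Hi_\pi^V$ as a Hecke module must be brought to bear. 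Once $T$ is known to be bounded, its kernel and image are $G$-invariant, so irreducibility and Schur's lemma promote $T$ to an isometric embedding $\pi \hookrightarrow \ell^2(G/V) \subseteq \ell^2(\M_G)$, completing the argument.
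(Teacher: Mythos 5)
Your preliminary steps are sound: faithfulness is immediate as you say; the circumcenter argument (minimal-norm vector in the closed convex hull of a $V$-orbit) correctly shows that the $V$-fixed vectors, $V$ ranging over open subgroups, span a dense subspace; and the finite decomposition of $\ell^2(G/V)$ does follow from the finiteness of $V\backslash G/V$, since the commutant embeds into the space of $V$-fixed vectors of $\ell^2(G/V)$ via $T\mapsto T\delta_{eV}$ and is therefore a finite-dimensional von Neumann algebra. (The paper reaches the same conclusion differently, via the commensurator of $V$ and induction from a finite quotient, at the end of Lemma~\ref{tensor}.)

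However, the step you yourself flag as ``the crux'' is a genuine gap, and it is not a routine functional-analytic verification: it is essentially the entire content of the classification theorem that the paper cites as a black box. Concretely, you need that for a $V$-fixed vector $\xi$ in an irreducible $\pi$, the coefficient $\langle\pi(g)\xi,\xi\rangle$ vanishes whenever $[V:V\cap gVg^{-1}]=\infty$; only then does the Schur test make $T\colon\delta_{gV}\mapsto\pi(g)\xi$ bounded. Your proposed route to this fact invokes ``the finite-dimensionality of $\Hi_\pi^V$ as a Hecke module,'' but that finite-dimensionality is itself unproven and, absent the theorem, unavailable: it would follow from the embedding $\pi\hookrightarrow\ell^2(G/V)$ you are trying to construct, so the argument as sketched is circular. (Finiteness of $V\backslash G/V$ alone does not bound $\dim\Hi_\pi^V$ for a general representation, and for irreducible $\pi$ it is a consequence, not an input, of the classification.) The same missing lemma also underlies your Part~(1), since extracting an irreducible subrepresentation from a minimal Hecke submodule requires the same control. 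The paper does not attempt to fill this in either: it quotes \cite[Th.~4.2]{Tsankov_2012} for Item~(1), and deduces Item~(2) in Lemma~\ref{tensor} from the structure that theorem provides --- $\pi\cong\Ind_H^G(\sigma)$ with $\sigma$ factoring through a finite quotient $H/V$ --- using induction by stages and Peter--Weyl for the finite group $H/V$. So either you must import Tsankov's theorem at this point as the paper does, or supply an independent proof of the vanishing of spherical functions on infinite-index double cosets; your proposal does neither.
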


The first item is contained in \cite[Th. 4.2]{Tsankov_2012}. The second item is a consequence of the same result and is proved as part of Lemma \ref{tensor} below.

The main source of examples for such groups is model theory. Indeed, by the classical Ryll--Nardzewski Theorem (see e.g. \cite[Th. 7.3.1]{Hodges_1993}) the automorphism group of any $\aleph_0$-categorical model theoretical structure is part of this class. This includes the group $S_\infty$ of all permutations of the countable set, the group $\Aut(\Q,<)$ of order preserving bijections of $\Q$, $\Homeo(2^\N)$, $\Aut(R)$ the automorphism group of the random graph, or the group $\GL(\infty, q)$ of linear automorphisms of the countably infinite vector space over the finite field $\mathbb{F}_q$.

There are however fundamental properties that are lost in the Roelcke-precompact case: their irreducible representations can have infinite dimension and there is no Haar measure available. Using dynamical and model-theoretical properties of Roelcke-precompact non-archimedean Polish groups, we are still able to carry out similar constructions as Tannaka's and Kre\u{\i}n's. In particular, we obtain an analogue of Kre\u{\i}n's technical result. Indeed, let $\B(G)$ denote the set of matrix coefficients of $G$ arising from all the unitary representation of $G$ and let $C_b(G)$ denote the algebra of bounded continuous maps $G \longrightarrow \C$. Considering sums, tensor products and conjugates of representations, it is easily seen that $\B(G)$ is a subalgebra of $C_b(G)$, closed under complex conjugation. Let $\A$ be the linear span of the indicator maps of open cosets in $G$. This is in turn a subalgebra of $\B(G)$ (see Remark \ref{coset} Item 3) which is dense by Fact \ref{oligorep}. We obtained the following:

\begin{theo'}\label{Krein_Intro}
    Let $G$ be Roelcke-precompact non-archimedean Polish group. Multiplicative linear functionals $\tau \colon \A \longrightarrow \C$ are automatically positive and continuous.
\end{theo'}

Again, it implies that the Gel'fand spectra of $\A$ and its completion (i.e. the completion of $\B(G)$) coincide. Since this spectrum is compact, it cannot be isomorphic to $G$ in full generality. However, it can be endowed with a structure of \textit{semi-topological *-monoid}. This structure is known as the \textit{Hilbert compactification} $\Hc(G)$ of $G$ and has received quite some attention \cite{Glasner_2014, BenYaacov_2016}. Along the way, we obtain an identification of the Hilbert compactification of $G$ as the monoid of partial isomorphisms with algebraically closed domains of $\M_G$ seen as a homogeneous model theoretical structure (complete definitions are given in section 1).

Tannaka's approach also bears fruit in this context, providing another realization of $\Hc(G)$. Indeed, consider the category $\Rep(G)$ of unitary representations of $G$ that are isomorphic to a finite sum of irreducible representations. Let $\T(G)$ be the monoid of families of operators $(u_\pi)_{\pi \in R}$ that commute with sums, tensor products and representation morphisms (complete definitions are given in section \ref{Tannaka} Definition \ref{Tmonoid}). We obtained the following:

\begin{theo'}\label{Tannaka_Intro}
    Let $G$ be a Roelcke-precompact non-archimedean Polish group. Then $T(G)$ is a compact semi-topological *-monoid canonically isomorphic to the Hilbert compactification of $G$.
\end{theo'}

Finally, we establish that the Hilbert compactification fully remembers the original group. Indeed, a Roelcke-precompact non-archimedean Polish group $G$ is homeomorphic to the set of invertible elements of $\Hc(G)$ via the canonical map that sends $g$ in $G$ to the \textit{evaluation map at} $g$. Moreover, we can form the category whose objects are the $\Rep(G)$ for every Roelcke-precompact non-archimedean Polish group $G$. With the right arrows (the \textit{admissible} functors, see section \ref{Conc} Definition \ref{admissible}), we have the following:

\begin{theo'}\label{recons_Intro}
\;

    \begin{enumerate}
        \item Let $G$ be a Roelcke-precompact non-archimedean Polish group. The canonical map $G \longrightarrow \Hc(G)$ is a homeomorphic embedding that respects the algebraic structures. Its image is the set of invertibles of $\Hc(G)$.

        \item The contravariant functor $\Rep$ that sends a Roelcke-precompact non-archimedean Polish group to the category of its unitary representations is full and faithful, i.e. a \textit{duality}.
    \end{enumerate}

\end{theo'}

Numerous treatments, variants and generalizations of the Tannaka--Kre\u{\i}n duality can be found in the literature. An extensive exposition of the theory, recounting both Kre\u{\i}n's and Tannaka's approaches successively, appears in \cite{Hewitt_1970}. A more modern take on Tannaka's point of view, phrased in the language of category theory, which also lists developments and applications, figures in \cite{Joyal_1991}. Tannaka--Kre\u{\i}n duality has sprouted roots in different domains of study. One is category theory, with the works of Grothendieck, Deligne and Saavedra-Rivano among others, who introduced \textit{Tannakian formalism} to reverse the process of the classical duality. The idea is that to a \textit{Tannakian category} $\mathcal{C}$ can be associated an algebraic group $G$ such that $\mathcal{C}$ is equivalent to $\Rep(G)$ (see for instance \cite{SaavedraRivano_1972}). More recently, Tannaka--Kre\u{\i}n duality theory has been reappropriated in a prolific way for the study of \textit{quantum groups} and \textit{knot theory}. It also appears in mathematical physics, related to the \textit{superselection principle} in quantum field theory. See the survey \cite{Vainerman_2015}.

This article is organized as follows. In the first section we introduce the notations and some necessary preliminary results. The second section is inspired from Kre\u{\i}n's approach to the duality and contains a proof of Theorem ~\ref{Krein_Intro}. In section \ref{Tannaka}, where the formalism follows that of \cite{Joyal_1991}, we switch to Tannaka's point of view and use the automatic continuity property to prove Theorem \ref{Tannaka_Intro}. In the last section, we prove Theorem \ref{recons_Intro}.

\subsection*{Acknowledgments}

I am deeply grateful to Todor Tsankov for suggesting me to investigate on this topic and for his support in the making of this article.

\section{Preliminaries}

In this section, we set the global framework by fixing notations and stating useful general facts. We start with dynamical and model-theoretical notions. 

Let $G$ be a \textit{Polish group} i.e. a separable and completely metrizable topological group. For subgroups $U,V$ of $G$, we will write 
        $$U\backslash G/V = \left\{ UgV,\ g \in G\right\}.$$
This is a partition of $G$. Note that if $G$ is non-archimedean, then it is Roelcke-precompact if and only if $U\backslash G/U$ is finite for every open subgroup $U$ of $G$.

We will need the dynamical notion of algebraicity, which coincides with the model-theoretical one in the $\aleph_0$-categorical case but not in general. It is only assumed that the definition of first order structure, substructure and isomorphism, in the sense of \cite{Hodges_1993}, are known.

\begin{defi}
    Let $G$ be the automorphism group of some first-order structure $\M$ and $A \subseteq \M$.
\begin{itemize}
    \item[1.] We denote by $G_A$ the pointwise stabilizer of $A$ and by $G_{(A)}$ the setwise stabilizer of $A$.
    
    \item[2.] If $A$ is finite, an element $a \in \M$ is said to be \textit{algebraic over $A$} if the orbit $G_A\cdot a$ is finite. We denote by $\acl(A)$ the \textit{algebraic closure of $A$} i.e. the set of all elements of $\M$ that are algebraic over $A$. If $A$ is infinite, the algebraic closure of $A$ is $\acl(A) = \bigcup \{\acl(B),\ B\subseteq A \text{ finite}\}$.
    
    \item[3.] $A$ is said to be \textit{algebraically closed} if $\acl(A) = A$.
\end{itemize}
\end{defi}

Recall that $\acl$ is a closure operator, in particular it is non-decreasing and satisfies $\acl^2 = \acl$.

\begin{defi}
    Let $\M$ be a first-order structure. A \textit{partial automorphism} of $\M$ is an isomorphism between substructures of $\M$.
    
    We will see these maps as subsets of $\M\times \M$. Thus, for partial automorphisms $s,s'$ of $\M$, $s\subseteq s'$ means $s'$ extends $s$ and $s\cup s'$ denotes the unique common extension of $s$ and $s'$ to the substructure of $\M$ generated by $\dom(s)\cup\dom(s')$, if it exists.
    
    We will denote by $\Pa(\M)$ the set of partial automorphisms of $\M$ with algebraically closed domain and by F$(\M)$ the set of partial automorphisms of $\M$ with finite domain. Note that those sets are stable under \textit{composition where defined} and inversion. 
    
    Finally, $\M$ is \textit{homogeneous} if every element of $\FP(\M)$ extends to a \textit{total} automorphism, i.e. an element of $\Aut(\M)$.

\end{defi}

\begin{rem}\label{topo}
    Let $\M$ be a structure. $\Pa(\M)$ is a \textit{compact semi-topological *-monoid} i.e.:
    \begin{itemize}
        \item[--] it is stable under \textit{composition where defined} with a neutral element: $\operatorname{id_\M}$, 
        \item[--] it has an involutive anti-automorphism: $u \longmapsto u^* = \{(y,x), \ (x,y) \in u\}$,
        \item[--] it can be endowed with a compact topology that makes composition separately continuous and $^*$ continuous.
    \end{itemize}
    
    To define the topology, consider $K=\M\cup\{\infty\}$ the one-point compactification of the discrete structure $\M$. Elements $p\in K^K$ can be seen as partial maps $\M \longrightarrow \M$, undefined wherever $p(x) = ~\infty$. The domain of $p$ corresponds to the set $p^{-1}(\M)$. 
    
    $K^K$ is endowed with the product topology, which is compact and Hausdorff. In this space, $\Pa(\M)$ is exactly the closure of $G=\Aut(\M)$ hence compact. More details can be found in \cite[Prop. 3.8]{BenYaacov_2018}. Subbasic neighborhoods of an element $u_0 \in \Pa(\M)$ are of two kinds:
    $$O_{x} = \{u \in \Pa(\M), \  u(x) = u_0(x)\},$$
    where $x\in \dom(u_0)$ and
    $$U_{x,A} = \{u\in \Pa(\M),\ \left[x\in \dom(u) \Rightarrow u(x) \notin A\right]\},$$
    where $x \notin \dom(u_0)$ and $A$ is a finite subset of $\M$.

    Note also that $G$ naturally acts on $\Pa(\M)$ by left-composition which is hence a semi-topological *-monoidal \textit{compactification} of $G$. An extensive survey regarding such objects can be found in \cite{Glasner_2014}. 
\end{rem}

We will be needing the following combinatorial result, a variant of B.H. Neumann's Lemma appearing as \cite[Th.1]{Birch_1976}:

\begin{theo}[B.H. Neumann's Lemma]
    Let $G$ be a group acting on a set $X$ such that all orbits are infinite. For every finite subsets $A,B \subseteq X$, there exist $g \in G$ such that $g(A)\cup B = \emptyset$.
\end{theo}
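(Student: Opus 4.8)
The displayed relation should read $g(A)\cap B=\emptyset$: we want a single $g\in G$ that moves the finite set $A$ entirely off the finite set $B$. The plan is to translate this into a statement about coset coverings of $G$. Finding such a $g$ amounts to finding $g$ that avoids, for every pair $(a,b)\in A\times B$, the ``bad'' set $E_{a,b}=\{h\in G:\ h\cdot a=b\}$; so it suffices to show that $G\neq\bigcup_{(a,b)\in A\times B}E_{a,b}$.

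Each $E_{a,b}$ is either empty (when $b\notin G\cdot a$) or, after fixing some $g_0$ with $g_0\cdot a=b$, equal to the left coset $g_0 G_a$, where $G_a=\{h\in G:\ h\cdot a=a\}$ is the stabilizer of $a$. By the orbit--stabilizer correspondence $[G:G_a]=|G\cdot a|$, and since all orbits are infinite by hypothesis, every such $G_a$ has infinite index in $G$. As $A\times B$ is finite, the statement reduces to the following classical fact (B.H. Neumann's covering theorem): a group is never the union of finitely many left cosets of subgroups of infinite index.

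To prove that fact, suppose for contradiction that $G=\bigcup_{i=1}^{n}x_iH_i$ with every $[G:H_i]=\infty$, and argue by induction on the number of distinct subgroups among the $H_i$. If they are all equal to a single $H$, then $G$ is a union of finitely many cosets of $H$, forcing $[G:H]<\infty$, a contradiction. Otherwise pick one subgroup, say $H_n$, and let $I$ index the cosets in the cover whose subgroup is $H_n$. Since $[G:H_n]=\infty$ but only finitely many $H_n$-cosets occur, there is a coset $gH_n$ equal to none of them, hence disjoint from all of them, hence entirely covered by the remaining cosets: $gH_n\subseteq\bigcup_{i\notin I}x_iH_i$. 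Translating on the left by $x_j g^{-1}$ for each $j\in I$ then gives $x_jH_n=x_jg^{-1}(gH_n)\subseteq\bigcup_{i\notin I}(x_jg^{-1}x_i)H_i$, so every $H_n$-coset can be replaced by cosets of the other (still infinite-index) subgroups. This strictly decreases the number of distinct subgroups while preserving the hypotheses, and the induction closes.

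The main obstacle is precisely the covering theorem of the last paragraph: the reduction to a disjointness-of-bad-cosets problem is routine, but ruling out a finite cover by infinite-index cosets requires the non-trivial combinatorial translation trick above. In the write-up this is the content of \cite[Th.~1]{Birch_1976} and may simply be quoted; I expect everything else to be bookkeeping.
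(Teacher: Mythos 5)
Your proof is correct, including the observation that the displayed $\cup$ is a typo for $\cap$. Note, however, that the paper offers no internal proof of this statement to compare against: it is quoted (typo included) from \cite[Th.~1]{Birch_1976} and used as a black box. Your argument is the classical self-contained one: the reduction identifies each bad set $E_{a,b}=\{h\in G:\ h\cdot a=b\}$ as either empty or a left coset of the stabilizer $G_a$, which has infinite index by the orbit--stabilizer correspondence since the orbit $G\cdot a$ is infinite; the heart of the matter is then B.H.~Neumann's covering lemma (a group is never a finite union of left cosets of infinite-index subgroups), which you prove by the standard induction on the number of distinct subgroups, translating an uncovered coset of the chosen subgroup to replace all of its cosets by cosets of the remaining subgroups. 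The induction is sound: the replacement cosets belong only to the surviving subgroups, these still have infinite index, and the number of distinct subgroups strictly decreases, with the single-subgroup case giving the contradiction $[G:H]<\infty$. This renders the paper's cited input self-contained at essentially no cost; the only thing to flag is that it is a proof of the reference, not an alternative to an argument the paper actually gives.
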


More precisely, the following translation of the above Theorem will be key in establishing the main result of section \ref{Krein}.

\begin{lem}\label{keylem}
    Let $G$ the automorphism group of some homogeneous countable structure $\M$ and assume it is Roelcke-precompact. Let $s,s_1,...,s_n$ be partial automorphisms of $\M$ with finite domain such that:
    $$\forall i \leqslant n,\ \dom(s_i) \not \subseteq \acl(\dom(s))$$

    Then, there exist $g \in G$ that extends $s$ but none of the $s_i$'s.
\end{lem}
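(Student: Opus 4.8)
The plan is to reduce the statement to a single application of B.H. Neumann's Lemma. Writing $A = \dom(s)$, a finite set, and $H = G_A$ for its pointwise stabilizer, I would first use homogeneity: since $s \in \FP(\M)$, it extends to some total automorphism $g_0 \in G$. A quick computation then shows that the automorphisms of $\M$ extending $s$ are exactly the elements of the coset $g_0 H$, for if $g$ extends $s$ then $g_0^{-1}g$ fixes $A$ pointwise, and conversely every $g_0 h$ with $h \in H$ agrees with $g_0$, hence with $s$, on $A$. So the task becomes to choose $h \in H$ so that $g_0 h$ extends none of the $s_i$.

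Next I would translate the failure to extend $s_i$ into a pointwise condition. By hypothesis, for each $i \leqslant n$ there is $a_i \in \dom(s_i)$ with $a_i \notin \acl(A)$; by the definition of algebraicity this means the orbit $H \cdot a_i$ is infinite. Setting $b_i = g_0^{-1}(s_i(a_i))$, one has, for $g = g_0 h$ with $h \in H$, the equivalence $g(a_i) = s_i(a_i) \iff h(a_i) = b_i$. Hence as soon as $h(a_i) \neq b_i$ the automorphism $g$ disagrees with $s_i$ at $a_i \in \dom(s_i)$ and therefore does not extend $s_i$. It is thus enough to find $h \in H$ with $h(a_i) \neq b_i$ for every $i \leqslant n$.

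The point is that Neumann's Lemma cannot be applied to the action of $H$ on all of $\M$, since the points of $\acl(A)$ have finite orbits. I would therefore restrict to the $H$-invariant set $Y = \bigcup_{i \leqslant n} H \cdot a_i$: every $H$-orbit contained in $Y$ is one of the infinite orbits $H \cdot a_i$, so the hypothesis of the Lemma is satisfied for $H \acts Y$. Applying it to the finite sets $\{a_1, \dots, a_n\}$ and $\{b_1, \dots, b_n\} \cap Y$ produces $h \in H$ with $h(a_i) \notin \{b_1, \dots, b_n\} \cap Y$ for all $i$; in particular $h(a_i) \neq b_i$ whenever $b_i \in Y$, while the inequality $h(a_i) \neq b_i$ holds automatically when $b_i \notin Y$, since $b_i$ then lies outside the orbit of $a_i$. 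Then $g = g_0 h$ extends $s$ but none of the $s_i$, as desired.

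I expect the restriction to $Y$ to be the only real subtlety: the Lemma as stated demands that every orbit be infinite, which fails for the ambient action, and the non-algebraicity hypothesis on the $\dom(s_i)$ is exactly what is needed to guarantee that the relevant orbits $H \cdot a_i$ are infinite, so that passing to the invariant subset $Y$ is legitimate.
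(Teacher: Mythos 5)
Your proof is correct, and while it shares its overall skeleton with the paper's argument (extend $s$ to $g_0$ by homogeneity, then correct $g_0$ by an element of a pointwise stabilizer produced by B.H.\ Neumann's Lemma), it differs in a substantive way in how the hypothesis of Neumann's Lemma is verified. The paper works with the larger stabilizer $G_{\acl(\dom(s))}$ acting on $\M \setminus \acl(\dom(s))$ and must invoke Lemma 2.4 of \cite{Evans_2016} -- and hence Roelcke-precompactness -- to know that all orbits of that action are infinite (this is not automatic, since passing from $G_{\dom(s)}$ to the smaller group $G_{\acl(\dom(s))}$ could a priori shrink orbits). You instead stay with $H = G_{\dom(s)}$, where the infinitude of the orbits $H \cdot a_i$ is immediate from the dynamical definition of $\acl$ over a finite set, and you restrict the action to the $H$-invariant set $Y = \bigcup_i H \cdot a_i$ to meet the ``all orbits infinite'' requirement; you also correctly dispose of the points $b_i \notin Y$ by noting that $h(a_i)$ stays in $Y$. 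The net effect is that your argument is more elementary and does not use the Roelcke-precompactness hypothesis at all, so it proves the lemma for an arbitrary countable homogeneous structure; the paper's route, by contrast, packages the orbit computation into an external lemma at the cost of an extra hypothesis. Both proofs are valid for the use made of the lemma later in the paper.
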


\begin{proof}
    By homogeneity of $\M$, we can pick $g_0 \in G$ extending $s$. Fix also $x_i \in \dom(s_i) \backslash \acl(\dom(s))$ for every $i\leqslant n$. Using Roelcke-precompactness of $G$, Lemma 2.4 in \cite{Evans_2016} tells us that $G_{\acl(\dom(s))} \curvearrowright \M \backslash \acl(\dom(s))$ only has infinite orbits. Now B.H. Neumann's Lemma above gives $u \in G_{\acl(\dom(s))}$ such that:
    $$u(\{x_1,...,x_n\}) \cap \{g_0^{-1}(s_1(x_1)),...,g_0^{-1}(s_n(x_n))\} = \emptyset.$$
    Then $g=g_0u$ extends $s$ but none of the $s_i$'s
\end{proof}

\begin{rem}\label{coset}
Let $G$ be a Roelcke-precompact non-archimedean Polish group. We will call (open) \textit{cosets} of $G$ the left-translates of (open) subgroups of $G$. The set of open cosets of $G$ will be denoted $\M_G$. We recall well-known facts about these objects and give sketches of proofs.

    \begin{enumerate}
        \item $\M_G$ is countable. Indeed, let $(U_n)$ be a countable basis of open neighborhood of $G$ consisting of open subgroups and let $D \subseteq G$ be a countable dense subset. Let $U$ be an arbitrary open subgroup of $G$. There exists $n\in \N$ such that $U_n \subseteq U$. Since $G$ is Roelcke-precompact, there is $F\subseteq D$ finite such that $U=U_nFU_n$. This only allows for countably many distinct open subgroups. Moreover, since every open coset $gU$ of $G$ must contain an element of $D$ by density, $G$ only has countably many open cosets.

        \item $G$ naturally acts on the left of $\M_G$. This action is continuous and the associated morphism $G \longrightarrow \Sym(\M_G)$ is a homeomorphic embedding. Because $G$ and $\Sym(\M_G)$ are both Polish, $G$ must be closed in $\Sym(\M_G)$. Thus, after naming the orbits of the diagonal actions $G \curvearrowright \M_G^n$ for every $n\in \N$, we can view $\M_G$ as a discrete first-order homogeneous structure such that:
            $$G = \Aut(\M_G).$$     

        \item Let $gU, hV$ be cosets of $G$. Then, $gU\cap hV \neq \emptyset$ if and only if there is $f \in G$ such that $fU = gU$ and $fV=hV$. In particular, if $gU\cap hV$ is non-empty, then it is a left-translate of $U\cap V$.
        
        \item Suppose $G= \Aut(\M)$ for some countable and homogeneous structure $\M$. There is a natural map $\FP(\M) \longrightarrow \M_G$. Indeed, for $s \in \FP(\M)$, fix an extension $g_0 \in G$. Then, a given automorphism $g$ of $\M$ extends $s$ if and only if $g \in g_0G_{\dom(s)}$ and this coset does not depend on the choice of $g_0$. Similarly, if $\dom(s) = A \subseteq B$ where $A$ and $B$ are finite, there is a bijection between $G_A/ G_B$ and $\{s'\in \FP(\M), \ s\subseteq s' \text{ and } \dom(s') = B\}$, namely $s' \longmapsto g_0^{-1}gG_B$ where $g$ is any extension of $s'$ (this map does depend on the choice of $g_0$).
    \end{enumerate}
\end{rem}

Switching to representation theory, we will write $\Hilb(G)$ for the closure of $\B(G)$ in $C_b(G)$ with respect to the norm topology. This is called the Hilbert algebra of $G$. The \textit{Hilbert compactification} $\Hc(G)$ of $G$ is the Gel'fand spectrum of $\Hilb(G)$, i.e. the weak*-compact space of non-zero multiplicative linear functionals on $\Hilb(G)$ that commute with complex conjugation. 

Given two representations $\pi, \pi'$ of $G$, a morphism of representations between $\pi$ and $\pi'$, or \textit{intertwining operator}, is a bounded operator $h\colon \Hi_\pi \to \Hi_{\pi'}$ that commutes with the action of $G$:
$$\forall g \in G, \ h\circ \pi(g) = \pi'(g) \circ h.$$
The set of such operators will be denoted either by $\Hom(\pi,\pi')$ or $\Hom_G(\Hi_\pi, \Hi_{\pi'})$.

The following are consequences of the results in \cite{Tsankov_2012}. For the definition and properties of \textit{induced representations}, we refer the reader to section 6 of \cite{Folland_2016}. It is usually defined in the locally compact setting using invariant measures but the basic results still hold in this context with the same proofs, and the formalism is even simpler. Indeed, our quotient spaces are all discrete hence can be endowed with the counting measure. The distinction between isomorphic representations is often omitted.

\begin{lem}\label{tensor}
    Let $G$ be a Roelcke-precompact non-archimedean Polish group
    \begin{enumerate}
        \item Every irreducible representation of $G$ is a subrepresentation of $\Lambda_G \colon \ell^2(\M_G)$ which is faithful.
        
        \item Let $\pi_1,\pi_2$ be two irreducible representations of $G$. Then $\pi_1\otimes \pi_2$ decomposes as a finite sum of irreducible subrepresentations.
    \end{enumerate}
\end{lem}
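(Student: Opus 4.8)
The plan is to decompose $\Lambda_G$ along the orbits of the action $G \acts \M_G$ and to leverage Frobenius reciprocity together with the assumed splitting of every representation into irreducibles (Fact \ref{oligorep}(1)). The orbits of $G$ on $\M_G$ are precisely the homogeneous spaces $G/U$ of left cosets of open subgroups $U$, so $\Lambda_G \cong \bigoplus_U \ell^2(G/U)$ with $\ell^2(G/U) = \Ind_U^G \triv$, and each such summand is a subrepresentation of $\Lambda_G$.

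For item 1, I would first note that $\Lambda_G$ is faithful: an element in its kernel fixes every open coset, in particular lies in every open subgroup $U$, hence equals $e$ since these intersect trivially (this also follows from the embedding $G \hookrightarrow \Sym(\M_G)$ of Remark \ref{coset}(2)). Next I would record the smoothness of representations: for a nonzero continuous unitary $\pi$, continuity and non-archimedeanity give, for any $\xi \neq 0$, an open subgroup $U$ with $\pi(u)\xi$ uniformly close to $\xi$ on $U$; the norm-minimal element of the weakly closed convex hull of $\{\pi(u)\xi : u \in U\}$ is then a nonzero $U$-fixed vector (the center-of-mass argument replacing Haar averaging). Applied to an irreducible $\pi$, this yields an open $U$ with $\Hi_\pi^U \neq 0$, so by Frobenius reciprocity (valid here since $G/U$ is discrete) $\Hom_G(\Ind_U^G\triv,\pi) \cong \Hi_\pi^U \neq 0$. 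A nonzero intertwiner $T$ has adjoint $T^*$ with $TT^*$ a positive scalar by Schur's lemma, so a normalization of $T^*$ is an isometric $G$-embedding of $\pi$ into $\ell^2(G/U) \subseteq \Lambda_G$.

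For item 2, I would pick open $U_i$ with $\Hi_{\pi_i}^{U_i} \neq 0$ and set $U = U_1 \cap U_2$, so that both $\pi_i$ embed in $\ell^2(G/U)$ and hence $\pi_1 \otimes \pi_2$ embeds $G$-equivariantly into $\ell^2(G/U) \otimes \ell^2(G/U) \cong \ell^2\big((G/U)^2\big)$ with the diagonal action. The $G$-orbits on $(G/U)^2$ are in bijection with $U \backslash G / U$, which is finite by Roelcke-precompactness; writing $W_1,\dots,W_k$ for the corresponding point stabilizers (open subgroups of the form $U \cap gUg^{-1}$) gives $\ell^2\big((G/U)^2\big) \cong \bigoplus_{j=1}^k \ell^2(G/W_j)$.

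It then remains to show each quasi-regular representation $\ell^2(G/W) = \Ind_W^G\triv$ has finite length, and this is the step I expect to be the crux. Decomposing it as $\bigoplus_i m_i \sigma_i$ into distinct irreducibles via Fact \ref{oligorep}(1) and passing to $W$-fixed vectors gives $\dim \ell^2(G/W)^W = \sum_i m_i \dim \Hi_{\sigma_i}^W$. The left side is bounded by $|W\backslash G/W|$, finite by Roelcke-precompactness, while every occurring $\sigma_i$ satisfies $\dim \Hi_{\sigma_i}^W \geq 1$; hence $\sum_i m_i < \infty$. Thus $\ell^2\big((G/U)^2\big)$ is a finite sum of finite-length representations, and its subrepresentation $\pi_1 \otimes \pi_2$ is a finite sum of irreducibles. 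The essential point is this conversion of Roelcke-precompactness, in the guise of finiteness of double cosets (equivalently of $\dim \ell^2(G/W)^W$), into a bound on representation-theoretic length; the only other delicate ingredient is the production of open-subgroup-fixed vectors, which the center-of-mass argument supplies.
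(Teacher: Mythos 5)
The central step of your item 1 is not valid: unitary Frobenius reciprocity in the form $\Hom_G(\Ind_U^G\triv,\pi)\cong \Hi_\pi^U$ fails for open subgroups of \emph{infinite} index, even though $G/U$ is discrete. The natural map $T\longmapsto T(\delta_{eU})$ from $\Hom_G(\ell^2(G/U),\pi)$ into $\Hi_\pi^U$ is injective, but given a $U$-fixed vector $v$ the assignment $\delta_{gU}\longmapsto \pi(g)v$ need not extend to a \emph{bounded} operator on $\ell^2(G/U)$. Concretely, take $G=S_\infty$, $U$ the stabilizer of a point and $\pi=\triv$: then $\Hi_\pi^U=\C$, while $\ell^2(G/U)=\ell^2(\N)$ contains no non-zero $G$-invariant vector, so $\Hom_G(\ell^2(\N),\triv)=0$ and $\triv$ does not embed into $\ell^2(G/U)$. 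Your argument would conclude that it does. (The center-of-mass production of a non-zero $U$-fixed vector is fine; the problem is that the open subgroup it produces need not be one for which $\pi\leqslant\ell^2(G/U)$.) The paper avoids this by invoking Tsankov's classification: $\pi\cong\Ind_H^G(\sigma)$ with $H$ open and $\sigma$ factoring through a finite quotient $H/V$, whence $\sigma\leqslant\lambda_{H/V}$ by Peter--Weyl for the finite group $H/V$ and $\pi\leqslant\Ind_H^G(\lambda_{H/V})=\lambda_{G/V}$ by induction by stages; the finiteness of $H/V$ is what makes the relevant reciprocity legitimate. Since your item 2 opens by embedding each $\pi_i$ into $\ell^2(G/U_i)$ via the same step, the gap propagates there.

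The remainder of your item 2 is correct and genuinely different from the paper at the crux. Granting that each $\pi_i$ embeds into some $\ell^2(G/V_i)$, your orbit decomposition of $\ell^2(G/V_1)\otimes\ell^2(G/V_2)$ into finitely many quasi-regular pieces is exactly the paper's computation. For the finite length of $\ell^2(G/W)$, the paper passes through the commensurator of $W$, a finite-index normal core, Peter--Weyl for the resulting finite group and irreducibility of induced representations. Your counting argument needs none of this: every irreducible summand $\sigma$ of $\ell^2(G/W)$ satisfies $\Hi_\sigma^W\neq 0$ because $T^*(\delta_{eW})$ is a non-zero $W$-fixed vector for any non-zero embedding $T$ (this direction of reciprocity \emph{is} valid), while $\dim\ell^2(G/W)^W$ is bounded by the number of finite $W$-orbits on $G/W$, hence by $|W\backslash G/W|<\infty$; only complete reducibility (Fact 1(1)) and Roelcke-precompactness are used. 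That part is a clean alternative to the paper's argument and worth keeping once item 1 is repaired.
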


\begin{proof}

We first show that any irreducible representation of $G$ is a subrepresentation of a left-quasi-regular representation $\lambda_{G/V} \colon G \curvearrowright \ell^2(G/V)$ of $G$ for some open subgroup $V$ (the quotient space $G/V$ is discrete and endowed with the counting measure).

Let $\pi$ be an irreducible representation of $G$. Using Theorem 4.2 from \cite{Tsankov_2012}, $\pi$ is isomorphic to an induced representation of the form $\Ind_{H}^G(\sigma)$, where $H$ is an open subgroup of $G$ and $\sigma$ is an irreducible representation of $H$ that factors through a finite quotient of $H$. More explicitly, there exists $V$ an open normal subgroup of $H$ of finite index such that $\sigma$ is the pullback of an irreducible representation of the finite group $H / V$. 

It is a fact that the quasi-regular representations $\lambda_{G/ V} \colon G \curvearrowright \ell^2(G/ V)$ and $\lambda_{H/ V} \colon H\curvearrowright~\ell^2(H/ V)$ are linked in the following way:
\begin{equation}\label{induction}
    \lambda_{G/ V} = \Ind_V^G(\triv_V) = \Ind_H^G(\Ind_V^H(\triv_V)) = \Ind_H^G(\lambda_{H/V}),
\end{equation}
where $\triv_V$ denotes the trivial one-dimensional representation of $V$. The first and last equalities are basic properties of induction and the second one is given by the Theorem on induction by stages (see e.g. \cite[Theorem 6.14]{Folland_2016}). Now, applying the Peter-Weyl Theorem to the finite group $H/V$, we get that $\sigma$ is a subrepresentation of $\lambda_{H/V}$. Since induction preserves subrepresentations, we deduce from (\ref{induction}) that $\pi = \Ind_H^G(\sigma)$ is a subrepresentation of $\lambda_{G/ V}=\Ind_H^G(\lambda_{H/V})$. Since $\lambda_{G/V}$ is a subrepresentation of $\Lambda_G$, so is $\pi$. The faithfulness of $\Lambda_G$ is then a direct consequence of the fact that closed permutation groups satisfy the Gel'fand-Raikov Theorem \cite[Th. 1.1]{Tsankov_2012} hence Item 1 is proved.

Next, fix $\pi_1$ and $\pi_2$ two irreducible representations of $G$. From what precedes, we can find two open subgroups $V_1$ and $V_2$ of $G$ such that $\pi_i$ is a subrepresentation of $\lambda_{G/V_i}, \ $ $i=1,2$. Then:
    \begin{align*}
        \ell^2(G/ V_1) \otimes \ell^2(G/ V_2) &\simeq \ell^2(G/ V_1 \times G/ V_2)\\
        &\simeq \bigoplus_{f\in V_1\backslash G/V_2} \ell^2(G\cdot (V_1,fV_2))\\
        &\simeq \bigoplus_{f\in V_1\backslash G/V_2} \ell^2\left(G/ \left(V_1\cap fV_2f^{-1}\right)\right).
    \end{align*}

Since there is a natural surjective map $\; \left(V_1 \cap V_2\right) \backslash G  / \left( V_1 \cap V_2\right) \longrightarrow V_1\backslash G /V_2\;$ and $G$ is Roelcke-precompact, $V_1 \backslash G  / V_2$ is finite. In particular, we have shown that $\pi_1 \otimes \pi_2$ is a subrepresentation of a finite sum of quasi-regular representations.

To finish the proof, it suffices to show that every quasi-regular representation $\lambda_{G/V}$ for $V$ an open subgroup of $G$ splits as a finite sum of irreducibles. Indeed, let $H$ be the \textit{commensurator} of $V$, meaning:
$$H =\left\{g \in G, \ [V,V \cap gVg^{-1}] < \infty \quad \& \quad [gVg^{-1},V \cap gVg^{-1}]\ \right\}.$$

The hypotheses on $G$ implie that $V$ has finite index in $H$ \cite[Lem. 2.7]{Tsankov_2012}. Hence, 
$$V' := \bigcap_{h\in H} hVh^{-1},$$
is in fact a finite intersection of conjugates of $V$. Thus, it is open and has finite index in $V$ by the definition of $H$. 

To sum up, we have obtained that $V'$ is an open normal subgroup of $H$ with finite index. In particular, $H/V'$ is a finite group hence the quasi-regular representation $\lambda_{H/V'}$ of $H$ is finite dimensional. It thus splits as a finite sum of irreducible subrepresentations:
$$\lambda_{H/V'} = \bigoplus_{1\leqslant i \leqslant n} \sigma_i.$$
Then, by the basic properties of induction:
$$ \lambda_{G/ V'} =\Ind_H^G(\lambda_{H/V}) = \bigoplus_{1\leqslant i \leqslant n} \Ind_H^G(\sigma_i)$$
Finally, each of the $\Ind_H^G(\sigma_i)$ is irreducible by \cite[Prop. 4.1]{Tsankov_2012}. Since $V' \subseteq V,\ \lambda_{G/V}$ is a subrepresentation of $\lambda_{G/V'}$ hence also splits as a finite sum of irreducibles.

\end{proof}

In the next sections, we give two different characterizations of $\Hc(G)$ for a Roelcke-precompact non-archimedean Polish group $G$. For a detailed study of the Hilbert compactification of such groups, see \cite{BenYaacov_2018}.

\section{Application to Kre\u{\i}n's duality}\label{Krein}

In \cite{Krein_1941}, Kre\u{\i}n associated to a compact group $G$ what is now called a \textit{Kre\u{\i}n algebra}. It is built on the algebra of finite dimensional matrix coefficients of $G$ together with a specific basis \textit{and} the information about how tensor products of irreducible representations decompose. We describe a similar construction: we define a canonical dense subalgebra $\A$ of $\B(G)$ for $G$ a Roelcke-precompact non-archimedean Polish group, fixed from now on. In some cases, it comes with a canonical basis. However, at the intersection of both contexts, while the algebra we get coincides with the one appearing in Kre\u{\i}n's work, the basis we (sometimes) obtain is different. 

The main result of this section is the automatic continuity of multiplicative linear functionals on that canonical algebra, an analogue of a central technical lemma in the establishment of the original duality. A direct consequence of this is the identification in a canonical way of the Gel'fand spectra of three algebras: $\A,\ \B(G)$ and $\Hilb(G)$, the spectrum of the latter being the definition of $\Hc(G)$. We also obtain a model theoretic description of $\Hc(G)$ as $\Pa(\M_G)$. 

Recalling Fact \ref{oligorep}, we consider the 'universal' representation $\Lambda_G \colon G \curvearrowright \ell^2(\M_G)$ and form the matrix coefficients obtained from the canonical basis $(\delta_x)_{x \in \M_G}$ of $\ell^2(\M_G)$. These are all the maps of the form 
$$f_{x,y} \colon G \to \C, \qquad  g \mapsto \left<g\cdot \delta_x, \delta_y\right> = \left\{
    \begin{array}{ll}
        1 & \mbox{if } g(x) = y, \\
        0 & \mbox{otherwise.}
    \end{array}
\right.$$ 
for $x,y \in \M_G$. We will denote by $\A$ the linear span of these maps. The set of these generators is exactly the set of indicator maps of open cosets in $G$ together with the zero function. In particular, $\A$ contains the constant functions. Moreover, recalling Item 3 in Remark \ref{coset}, $\A$ is in fact a unitary subalgebra of $\B(G)$ which moreover is norm-dense by Fact \ref{oligorep}. It is also stable under the involution $f\longmapsto  \bar{f}$ of $\B(G)$ induced by complex conjugation as well as the one induced by inversion in $G$.

Alternatively, $\A$ can be described in terms of finite partial automorphisms of $\M_G$. Indeed, define for every $s\in \FP(\M_G)$ a map 
$$e_s \colon G \longrightarrow \C, \qquad g \longmapsto \left\{
    \begin{array}{ll}
        1 & \mbox{if } g \mbox{ extends }s, \\
        0 & \mbox{otherwise.}
    \end{array}
\right.$$
It follows from Item 4 in Remark \ref{coset} that these are also exactly the indicator maps of open cosets in $G$. Note that for $s,s'\in \FP(\M_G)$, we have:
\begin{equation}\label{mult}
    e_s\cdot e_{s'} = \left\{
    \begin{array}{ll}
        e_{s\cup s'} & \mbox{if } s\cup s' \in \FP(\M_G), \\
        0 & \mbox{otherwise.}
    \end{array}
\right.
\end{equation}

A special case of interest is when $G$ can be presented as the automorphism group of some $\aleph_0$-categorical structure $\M$ that admits \textit{weak elimination of imaginaries}. For the formal definition, we refer the reader to section 4.2 of \cite{Hodges_1993}. Informally, an $\aleph_0$-categorical structure $\M$ eliminates imaginaries if we can recover all open subgroups of $G = \Aut(\M)$ from the action $G \curvearrowright \M$ up to finite index. More precisely, if $\M$ is an $\aleph_0$-categorical structure, the following property can be taken as a definition of \textit{weakly eliminating imaginaries} (see \cite{Tsankov_2012} Lemma 5.1): For every open subgroup $U$ of $G$, there exists a unique algebraically closed finite substructure $A$ of $\M$ such that 
$$G_A \subseteq U \subseteq G_{(A)}.$$
Note that $G_{(A)}/G_A$ is isomorphic to $\Aut(A)$, hence finite. In particular, $G_A$ has finite index in $U$. It is a classical fact that all the examples from the introduction weakly eliminate imaginaries (see for example section 4.2 of \cite{Hodges_1993}).

In that case, we can extract a canonical basis from the previous generating family of $\A$. Namely, defining $e'_s \colon G \longrightarrow \{0,1\}$ as above for every $s\in \FP(\M)$, we have the following:

\begin{prop}
    Let $\M$ be a countable $ \aleph_0$-categorical structure that admits weak elimination of imaginaries and let $G = \Aut(\M)$. Then
    $$B = \big{(}e'_s, \ s\in \FP(\M),\ \dom(s) = \acl(\dom(s))\big{)}$$
    is a basis of $\A$ (and a subfamily of the previous generating family).
\end{prop}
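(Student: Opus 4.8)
The plan is to verify the three defining properties of a basis in turn: that every $e'_s$ appearing in $B$ belongs to the generating family of $\A$, that $B$ spans $\A$, and that $B$ is linearly independent. The first point is immediate from Item 4 of Remark \ref{coset}: if $s\in\FP(\M)$ has algebraically closed domain $A=\dom(s)$, then $A$ is finite so $G_A$ is an open subgroup, and fixing any extension $g_0\in G$ of $s$ one has $e'_s=\1_{g_0G_A}$, the indicator of an open coset. Thus $B$ is a subfamily of the generating family, and the only real content lies in the spanning and independence statements.

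For the spanning property it suffices to express each generator $\1_{gU}$, with $U$ an open subgroup and $g\in G$, as a linear combination of elements of $B$. This is where weak elimination of imaginaries enters: it provides a unique algebraically closed finite substructure $A$ with $G_A\subseteq U\subseteq G_{(A)}$, and since $G_{(A)}/G_A\cong\Aut(A)$ is finite, $G_A$ has finite index in $U$. Writing $U=\bigsqcup_{i=1}^k u_iG_A$ as a finite disjoint union of cosets of $G_A$, I get $gU=\bigsqcup_{i=1}^k (gu_i)G_A$ and hence $\1_{gU}=\sum_{i=1}^k e'_{s_i}$, where $s_i=(gu_i)|_A$ is a finite partial automorphism with algebraically closed domain $A$. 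Since the generators span $\A$, so does $B$.

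The crux is linear independence, and this is where Lemma \ref{keylem} does the work. Suppose $\sum_{s\in S}c_s e'_s=0$ is a nontrivial relation among finitely many elements of $B$, and set $S'=\{s\in S:\ c_s\neq 0\}$. The idea is to choose $s_0\in S'$ whose domain $A_0=\dom(s_0)$ has minimal cardinality among $\{\dom(s):\ s\in S'\}$, and to evaluate the relation only at automorphisms $g$ extending $s_0$. For such $g$ each term $e'_s(g)$ falls into one of three cases: it is constantly $1$ when $s\subseteq s_0$, constantly $0$ when $s$ is incompatible with $s_0$, and otherwise $s$ is compatible with $s_0$ but not a restriction of it, which forces $\dom(s)\not\subseteq A_0=\acl(A_0)$. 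The minimality of $|A_0|$ guarantees that $s_0$ is the only element of $S'$ with $s\subseteq s_0$, since any such $s$ has $\dom(s)\subseteq A_0$ with $|\dom(s)|\geqslant|A_0|$, so $\dom(s)=A_0$ and $s=s_0$. The relation restricted to $\{g\supseteq s_0\}$ therefore reads $c_{s_0}+\sum c_s e'_s(g)=0$, the sum running over the compatible-but-not-restriction $s\in S'$, all of whose domains fail to lie in $\acl(A_0)$.

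Lemma \ref{keylem}, applied with $s=s_0$ and the $s_i$ ranging over these remaining partial automorphisms, then furnishes a single $g^*\in G$ extending $s_0$ but none of them; evaluating at $g^*$ annihilates every term of the sum and yields $c_{s_0}=0$, contradicting $s_0\in S'$. Hence no nontrivial relation exists and $B$ is a basis. The main obstacle is precisely this separation step, and the key device making it work is the minimal-cardinality choice of $s_0$, which removes the troublesome proper restrictions of $s_0$ so that exactly the hypotheses of Lemma \ref{keylem} remain.
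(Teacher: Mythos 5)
Your proof is correct and follows essentially the same route as the paper: the same coset decomposition via weak elimination of imaginaries for the spanning step, and Lemma \ref{keylem} applied to a minimal-domain element for linear independence. The only cosmetic differences are that you choose $s_0$ of minimal cardinality rather than minimal for inclusion and conclude by contradiction rather than by induction on the number of terms.
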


\begin{proof}
    First, recall that in an $\aleph_0$-categorical structure, algebraic closures of finite sets are finite and finite partial automorphisms always extend to total automorphisms. Hence, using Item 4 in Remark \ref{coset}, the elements of $B$ are indeed indicator functions of open cosets, namely of all cosets associated to subgroups of the form $G_{\acl(A)}$ for $A\subseteq \M$ finite.
    
    Now if $U$ is an open subgroup of $G$, we can find by weak elimination of imaginaries a finite algebraically closed substructure $A\subset \M$ such that $V = G_{\acl(A)} \subseteq U$ with finite index. Then, writing $U = u_1V \sqcup ... \sqcup u_nV$ for some $u_1,...,u_n\in U$, we have for every $g \in G$:
    $$\1_{gU} = \1_{gu_1V} + ... + \1_{gu_nV},$$
    where $\1_X$ denotes the indicator function of the set $X \subseteq G$. In particular, $B$ generates $\A$.

    It remains to see that $B$ is free. Assume we have an equation of the form $\lambda_1 e'_{s_1} + ... + \lambda_n e'_{s_n} = 0$ for some $\lambda_1,...,\lambda_n \in \C$ and distinct $s_1,...,s_n\in \FP(\M)$ with algebraically closed domains. Let $i$ be such that $\dom(s_i)$ is minimal for inclusion. Using Lemma \ref{keylem}, there exists $g \in G$ that extends $s_i$ without extending $s_j$ for any $j\neq i$ such that $\dom(s_j) \nsubseteq \dom(s_i)$. Note that if $j\neq i$ and $\dom(s_j) \subseteq \dom(s_i)$ then the domains are equal by choice of $i$ and there exists $a\in \dom(s_i)$ such that $s_j(a) \neq~s_i(a) =~g(a)$. Thus $g$ only extends $s_i$. Evaluating the above linking equation in $g$ yields $\lambda_i = 0$. We conclude by induction on $n$.

\end{proof}

\vspace{0,3cm}

The main object of study in this section is the Gel'fand spectrum of $\A$, denoted $\spa$. In the following Lemma, we do not need to assume that $G$ is Roelcke-precompact nor Polish.

\begin{lem}\label{uphi}
    Let $G$ be a non-archimedean topological group. Let $\phi \colon \A \to \C$ be a non-zero multiplicative linear functional. There exists a unique partial automorphism $u_\phi$ of $\M_G$ with algebraically closed domain such that:
    \begin{equation}   
    \forall s \in \FP(\M_G),\ \phi(e_s) =  \left\{
    \begin{array}{ll}
        1 & \mbox{if } u_\phi \mbox{ extends }s, \\
        0 & \mbox{otherwise.}
    \end{array}
    \right.
    \end{equation}
\end{lem}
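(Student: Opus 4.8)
The plan is to read $u_\phi$ off directly from the set of partial automorphisms on which $\phi$ takes the value $1$. First I would note that each $e_s$ is an idempotent of $\A$: by (\ref{mult}), $e_s^2 = e_{s\cup s} = e_s$, so multiplicativity gives $\phi(e_s)^2 = \phi(e_s)$ and hence $\phi(e_s) \in \{0,1\}$. Since $\phi \neq 0$ on the unital algebra $\A$ (which contains the constant $1 = e_\emptyset$), it is unital, so $\emptyset \in S := \{s \in \FP(\M_G) : \phi(e_s) = 1\}$. The structural fact I would extract is that $S$ is closed under consistent unions: if $s, s' \in S$ then $\phi(e_s e_{s'}) = \phi(e_s)\phi(e_{s'}) = 1 \neq 0$, so by (\ref{mult}) the union $s \cup s'$ must be a legitimate partial automorphism with $e_s e_{s'} = e_{s\cup s'}$, whence $s \cup s' \in S$. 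In particular $S$ is directed.

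Next I would define $u_\phi = \bigcup_{s \in S} s$ as a set of pairs. For $x$ lying in $\dom(s)$ for some $s \in S$, directedness forces $s(x)$ to be independent of the chosen $s$, so $u_\phi$ is a well-defined partial map with $\dom(u_\phi) = \bigcup_{s\in S}\dom(s)$; the same directedness (absorbing the finitely many relevant points into a single member of $S$) shows $u_\phi$ is injective and relation-preserving, hence a partial automorphism of $\M_G$. The defining equivalence is then clear in one direction, since $\phi(e_s) = 1$ means $s \in S$, so $u_\phi \supseteq s$; conversely, if $u_\phi \supseteq s$, I cover $\dom(s)$ by finitely many members of $S$ and take their union to obtain $s' \in S$ with $s \subseteq s'$, so that $e_s e_{s'} = e_{s'}$ and applying $\phi$ gives $\phi(e_s) = 1$.

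The hard part, and the only step genuinely using algebraicity, is showing $A := \dom(u_\phi)$ is algebraically closed. Take $a \in \acl(A)$; then $a \in \acl(A_0)$ for some finite $A_0 \subseteq A$, and by directedness I can fix $s_0 \in S$ with $A_0 \subseteq \dom(s_0)$, so $a \in \acl(\dom(s_0))$. The key observation is the finite decomposition
$$e_{s_0} = \sum_{t} e_t,$$
where $t$ ranges over the partial automorphisms extending $s_0$ with $\dom(t) = \dom(s_0) \cup \{a\}$: since $\phi(e_{s_0}) \neq 0$ the set $\{g : g \supseteq s_0\}$ is a nonempty left-translate $g_0 G_{\dom(s_0)}$, and as $a$ is algebraic over $\dom(s_0)$ the value $g(a)$ ranges over the finite orbit $g_0 \cdot (G_{\dom(s_0)}\cdot a)$; thus only finitely many such $t$ occur and each $g$ extending $s_0$ extends exactly one of them. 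Applying $\phi$ yields $1 = \sum_t \phi(e_t)$ with every summand in $\{0,1\}$, so exactly one $t$ lies in $S$, giving $a \in \dom(t) \subseteq A$. Hence $\acl(A) = A$. I would stress that this argument uses only the finiteness of algebraic orbits, not Roelcke-precompactness, which matches the generality of the statement (and in particular does not require Lemma \ref{keylem}).

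Finally, uniqueness follows by testing against singletons. If $u$ is any partial automorphism of $\M_G$ satisfying the stated equivalence, then for each $x \in \dom(u)$ the restriction $u|_{\{x\}}$ is a member $s = \{(x, u(x))\}$ of $\FP(\M_G)$ with $u \supseteq s$, so $\phi(e_s) = 1$; conversely $\phi(e_{\{(x,y)\}}) = 1$ forces $u(x) = y$. Thus both $\dom(u)$ and the values of $u$ are determined by $\phi$ alone, so $u = u_\phi$.
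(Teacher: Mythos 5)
Your proof is correct and follows essentially the same route as the paper: define $u_\phi$ as the union of all $s$ with $\phi(e_s)=1$, use multiplicativity of $\phi$ together with (\ref{mult}) to get consistency/directedness, and prove algebraic closedness of the domain via the finite decomposition of $e_{s_0}$ over the orbit of an algebraic point. The only difference is cosmetic (organizing the argument around the directed set $S$) plus your explicit uniqueness check via singletons, which the paper leaves implicit.
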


\begin{proof}
    First note that for every $s\in \FP(\M_G),\ \phi(e_s) \in \{0,1\}$ since $e_s^2 = e_s$. Then, let 
$$u_\phi =\bigcup_{\substack{s\in \FP(\M_G) \\ \phi(e_s) = 1}}s.$$
We will show that $u_\phi$ satisfies the claim.

        \begin{itemize}
            \item[--] \underline{$u_\phi$ is a function:}

    Let $x \in \M_G$ be such that there exists $s,s' \in \FP(\M_G)$ with $x\in \dom(s) \cap \dom(s')$ and $\phi(e_s) = 1 = \phi(e_{s'})$. By multiplicativity of $\phi$, we have 
    $$\phi(e_s e_{s'}) = \phi(e_s)\phi(e_{s'}) = 1.$$
    In particular, $e_s e_{s'} \neq 0$ which implies the existence of $g \in G$ that extends both $s$ and $s'$. Thus $s(x)=s'(x)$ and $u_\phi$ is a partial function $\M_G \longrightarrow \M_G$.

            \item[--] \underline{$u_\phi$ is a partial isomorphism:}
            
    Since $\M_G$ is a relational structure, every set in $\M_G$ is a substructure. Thus, it is enough to show that each restriction of $u_\phi$ to a finite set extends to an automorphism of $\M_G$. To that aim, let $A=\{a_1,...,a_n\}\subseteq \dom(u_\phi)$ be finite. By definition of $u_\phi$, there exists $s_1,...,s_n \in \FP(\M_G)$ such that for all $i\leqslant n$, $\ a_i \in \dom(s_i)$ and $\phi(e_{s_i}) = 1$. Up to replacing $s_i$ with $s_{i|A}$, we can assume $\dom(s_i) \subseteq A$ for every $i\leqslant n$. Using the multiplicativity of $\phi$, we get:
    $$\phi(e_{s_1} \cdots e_{s_n}) = \phi(e_{s_1}) \cdots \phi(e_{s_n}) = 1.$$
    In particular, $e_{s_1} \cdots e_{s_n} \neq 0$ and there exists $g \in G$ a common extension of $s_1,...,s_n$ hence of $u_{\phi|A}$. Note also that, as $e_{s_1} \cdots e_{s_n} = e_{s_1\cup \cdots \cup s_n} =  e_{u_{\phi|A}}$, we have that $\phi(e_{u_{\phi|A}}) = 1$.

            \item[--] \underline{$\dom(u_\phi)$ is algebraically closed:}

    Let $A\subseteq \dom(u_\phi)$ be finite and $b \in \acl(A)$.    
    As noted above, $\phi(e_{u_{\phi|A}}) = 1$ and there exists $g_0 \in G$ that extends $u_{\phi|A}$. An element $g \in G$ that coincides with $u_\phi$ on $A$ lies in $g_0G_A$ hence must send $b$ in the finite set $g_0G_A \cdot b$. In other terms, writing $g_0G_A\cdot b =\{g_0(b),...,g_n(b)\}$ for some $g_1,...,g_n \in G$, we have:
    $$e_{u_{\phi|A}} = e_{g_{0|A\cup\{b\}}} + ... + e_{g_{n|A\cup\{b\}}}.$$
    Applying $\phi$ to the above equation gives by linearity that:
    $$1 = \phi(e_{g_{0|A\cup\{b\}}}) + ... + \phi(e_{g_{n|A\cup\{b\}}}).$$ 
    Since $\phi$ is $\{0,1\}$ valued at idempotents, there exists $i\leqslant n$ such that $\phi(e_{g_{i|A\cup\{b\}}}) =~ 1$. This shows that $b$ lies in $\dom(u_\phi)$ which is thus algebraically closed.
    
        \end{itemize}

    Now that we have built $u_\phi$, recalling that $\phi(u_{\phi|A}) =1$ for every finite subset $A$ of $\M_G$, the Theorem is proved.
\end{proof}

In the Roelcke-precompact and Polish case, this construction yields an identification of the Gel'fand spectrum $\spa$ of $\A$:

\begin{coro}\label{spa}
    Let $G$ be a Roelcke-precompact non-archimedean Polish group. The map 
    \begin{align*}
    \spa &\longrightarrow \Pa(\M_G)\\
    \phi & \longmapsto u_\phi
    \end{align*}
    is a homeomorphism.
\end{coro}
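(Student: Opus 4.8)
The plan is to show that the map $\Phi\colon \phi \mapsto u_\phi$, which is well defined as a map $\spa \to \Pa(\M_G)$ by Lemma \ref{uphi}, is a continuous bijection from the compact space $\spa$ onto the Hausdorff space $\Pa(\M_G)$; such a map is automatically a homeomorphism. Since $\Pa(\M_G)$ is Hausdorff as a subspace of $K^K$ (Remark \ref{topo}), the points to arrange are the compactness of $\spa$, the continuity of $\Phi$, and its bijectivity.

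For compactness I would use that $\A$ is spanned by the idempotents $e_s$, so each $\phi \in \spa$ is determined by the family $(\phi(e_s))_{s \in \FP(\M_G)}$, which lies in $\{0,1\}^{\FP(\M_G)}$ because $\phi(e_s) \in \{0,1\}$ (as $e_s^2 = e_s$). This identifies $\spa$ with the subset of the compact space $\{0,1\}^{\FP(\M_G)}$ cut out by $\phi(e_\emptyset) = 1$, by the multiplicativity relations coming from (\ref{mult}), and by compatibility with every linear relation holding among the $e_s$ in $\A$. Each of these is a closed condition on finitely many coordinates (or an intersection of such), so $\spa$ is closed, hence compact. Moreover, for each fixed $s$ the map $\phi \mapsto \phi(e_s)$ is continuous with values in the discrete set $\{0,1\}$, so every set $\{\phi : \phi(e_s) = 1\}$ is clopen in $\spa$.

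Continuity of $\Phi$ is then checked on the subbasis of $\Pa(\M_G)$ from Remark \ref{topo}. For $O_x$, putting $s = u_0|_{\{x\}} \in \FP(\M_G)$, one has $u_\phi \in O_x$ exactly when $u_\phi$ extends $s$, i.e. when $\phi(e_s) = 1$, so $\Phi^{-1}(O_x)$ is clopen. For $U_{x,A}$, the complement $\Phi^{-1}(U_{x,A})^c$ consists of those $\phi$ with $x \in \dom(u_\phi)$ and $u_\phi(x) \in A$; this is the finite union, over those $a \in A$ lying in the orbit of $x$, of the clopen sets $\{\phi : \phi(e_{s_a}) = 1\}$ with $s_a = \{(x,a)\}$, hence clopen, so $\Phi^{-1}(U_{x,A})$ is open. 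Injectivity is immediate: if $u_\phi = u_\psi$ then $\phi$ and $\psi$ agree on every $e_s$ by the defining property in Lemma \ref{uphi}, hence on all of $\A$.

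For surjectivity I would avoid building an inverse explicitly and instead argue by density. Evaluation $\mathrm{ev}_g \colon f \mapsto f(g)$ is a non-zero multiplicative functional for each $g \in G$, and $\mathrm{ev}_g(e_s) = 1$ iff $g$ extends $s$, whence $u_{\mathrm{ev}_g} = g$; thus the image of $\Phi$ contains $G$, seen as the set of total automorphisms in $\Pa(\M_G)$. Since $\Phi$ is continuous and $\spa$ is compact, $\Phi(\spa)$ is compact, hence closed in the Hausdorff space $\Pa(\M_G)$; as $G$ is dense in $\Pa(\M_G)$, which is its closure in $K^K$ (Remark \ref{topo}), we get $\Phi(\spa) = \Pa(\M_G)$. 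The step needing the most care is precisely this surjectivity: a direct verification that $u \mapsto \phi_u$ is well defined would require compatibility with the linear relations among the $e_s$ (for which Lemma \ref{keylem} is the natural tool), and the density argument is what lets us bypass it.
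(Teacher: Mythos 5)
Your proof is correct, but it takes a genuinely different route from the paper's on the two substantive points. For surjectivity, the paper explicitly constructs $\phi_u$ from $u \in \Pa(\M_G)$ and uses Lemma \ref{keylem} to check that the prescription $\phi_u(e_s) = \1_{s \subseteq u}$ is compatible with the linear relations among the $e_s$; you instead observe that the image of $\phi \mapsto u_\phi$ contains $G$ (via evaluation functionals), is compact, hence closed, and invoke the density of $G$ in $\Pa(\M_G)$ from Remark \ref{topo}. This is shorter, but be aware that the dynamical content has not disappeared: the assertion that $\Pa(\M_G)$ is the closure of $\Aut(\M_G)$ (cited to \cite{BenYaacov_2018}) rests on exactly the same ingredient as Lemma \ref{keylem} (infinitude of orbits outside the algebraic closure), and the paper's explicit choice of $g_0$ satisfying $s_i \subseteq g_0 \iff s_i \subseteq u$ is reused verbatim in the proof of Theorem \ref{auto}, so the paper gets additional mileage from doing it by hand. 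For the topological conclusion, the paper proves continuity of the \emph{inverse} map and uses compactness of $\Pa(\M_G)$, whereas you prove continuity of the \emph{forward} map and compactness of $\spa$; your identification of $\spa$ with a closed subset of $\{0,1\}^{\FP(\M_G)}$ is a nice self-contained observation (compactness of the spectrum of $\A$ without invoking automatic continuity, precisely because $\A$ is spanned by idempotents), and your continuity check on the subbasis $O_x$, $U_{x,A}$ --- including the restriction to those $a \in A$ in the orbit of $x$ --- is accurate. Either direction of the compact-to-Hausdorff argument is legitimate here.
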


\begin{proof}
    It is straightforward from the previous result that the map is injective.
    We now show surjectivity, i.e. that if $u\in \Pa(\M_G)$ is fixed, the conditions
    \begin{equation*}    
    \forall s \in \FP(\M_G),\ \phi(e_s) =  \left\{
    \begin{array}{ll}
        1 & \mbox{if } u\mbox{ extends }s, \\
        0 & \mbox{otherwise,}
    \end{array}
    \right.
    \end{equation*}
    always define a multiplicative linear functional $\phi_u \colon \A \longrightarrow \C$. To that aim, assume we have an equation of the form
    \begin{equation}\label{eq}
    \sum_{i\leqslant n} \lambda_i e_{s_i} = 0
    \end{equation}
    for some $s_1,...,s_n\in \FP(\M_G)$ and $\lambda_i\in\C$. Recalling the identification $G = \Aut(\M_G)$ from Remark \ref{coset}, we set up for the application of Lemma \ref{keylem}.

    We define the following sets:
    $$A = \bigcup\{\dom(s_i),\ i\leqslant n,\ \dom(s_i)\subseteq \dom(u)\} \ \text{ and } \ I = \{i \leqslant n, \ \dom(s_i) \nsubseteq \acl(A)\}.$$
    By Lemma \ref{keylem}, there exists $g_0 \in G$ that extends $u_{|A}$ but none of the $s_i$ for $i\in I$. We claim that it satisfies the following:
    \begin{equation}
    \forall i\leqslant n, \ \left[ s_i\subseteq g_0 \iff s_i\subseteq u \right].
    \end{equation}

    Indeed, assume that $s_i \nsubseteq u$. Then either $\dom(s_i)$ is included in $\dom(u)$ or not. In the first case, there must exist $a \in \dom(s_i)$ such that $s_i(a) \neq u(a) = g_0(a)$ hence $s_i \nsubseteq g_0$. In the second case, we also have $\dom(s_i) \nsubseteq \acl(A)$ otherwise $\dom(s_i) \subseteq \acl(\dom(u)) = \dom(u)$. Hence $i \in I$ and $s_i \nsubseteq g_0$. The converse is clear by construction.

    Now, evaluating (\ref{eq}) in $g_0$ yields:
    \begin{equation*}
    0 = \sum_{i\leqslant n}\lambda_ie_{s_i}(g_0) = \sum_{s_i\subseteq g_0} \lambda_i = \sum_{s_i\subseteq u} \lambda_i.    
    \end{equation*}
    Thus $\phi_u$ is well defined and linear. We check the multiplicativity of $\phi_u$ on the spanning subset $\{e_s,\ s \in \FP(\M_G)\}$. Recalling (\ref{mult}), it follows from the observation that, for every $s,s'\in ~\FP(\M_G)$, we have:
    $$\phi_u(e_se_{s'}) = 1 \ \iff \ \left[s\cup s' \in \FP(\M_G) \text{ and } s\cup s' \subseteq u \right]\ \iff \ s,s' \subseteq u \ \iff \phi_u(e_s) = 1 = \phi_u(e_{s'}).$$
    Finally, it is non-zero since the empty map $\emptyset$ belongs to $\FP(\M_G)$ and we always have $\emptyset \subseteq u$.

    To finish the proof, recall that $\Pa(\M_G)$ is compact and Hausdorff. Since $\spa$ is Hausdorff too, we can use Poincaré's Theorem and it suffices to show that $\Pa(\M_G) \longrightarrow \spa$ is continuous.

    Recall that $\{e_s, \ s \in \FP(\M_G)\}$ are idempotents and linearly span $\A$. In particular, $\phi(e_s) \in ~\{0,1\}$ for every $\phi\in \spa$ and $s \in \FP(\M_G)$ hence the following sets form a subbasis for the topology on $\spa$:
    $$O_{s,\varepsilon} = \left\{\phi\in \spa,\ \phi(e_s) = \varepsilon\right\},  \qquad \text{ for } s \in \FP(\M_G) \text{ and } \varepsilon \in \{0,1\}.$$

    Now, let $s \in \FP(\M_G)$ and $\varepsilon \in \{0,1\}$. We treat the case $\varepsilon = 0$, the other is similar. We have:
    \begin{align*}
        \phi \in O_{s,0} 
        &\iff \phi(e_s) = 0\\
        &\iff s \nsubseteq u_\phi \\
        &\iff  \exists x \in \dom(s), \ \left[x \notin \dom(u_\phi) \text{ or } u_\phi(x) \neq s(x) \right]\\
        &\iff u_\phi \in \bigcup_{x\in \dom(s)}\left\{v\in \Pa(\M_G), \ x \in \dom(v) \Rightarrow v(x) \notin\{s(x)\} \right\}.
    \end{align*}
    Recalling the definition of the topology on $\Pa(\M_G)$ from Remark \ref{topo}, we see that the desired continuity holds.
    
\end{proof}

We can now turn to the main result of this section. As mentioned in the introduction, the next Theorem is an adaptation of a technical but crucial Lemma from the compact duality: Kre\u{\i}n proved \cite{Krein_1941} that for a compact group $K$, positive multiplicative linear functionals on $\B'(K)$ are automatically continuous. Bochner \cite{Bochner_1942} gave an alternative proof, based on generalized Fourier analysis and uniform approximation. It also appears as Theorem 30.2 in \cite{Hewitt_1970} with Bochner's proof and more context.  We prove here that multiplicative linear functionals on $\A$ are automatically continuous.

The key tool from the compact case, namely the Haar measure, is not available here. Instead, we rely on dynamical properties of Roelcke-precompact groups, in the form of Lemma \ref{keylem}.

\begin{theo} \label{auto}
Let $G$ a Roelcke-precompact non-archimedean Polish group and let $\phi \colon \A \longrightarrow \C$ be multiplicative and linear. Then $\phi$ is bounded. 
\end{theo}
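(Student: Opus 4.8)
The plan is to show that $\phi$ is bounded by exhibiting, for each element $f \in \A$, a uniform bound $\lvert \phi(f) \rvert \leqslant \lVert f \rVert_\infty$. The crucial structural input is Corollary \ref{spa}, which already tells us that multiplicative linear functionals are in bijection with $\Pa(\M_G)$ via $\phi \mapsto u_\phi$. In particular, Lemma \ref{uphi} pins down $\phi$ completely on the spanning set $\{e_s : s \in \FP(\M_G)\}$: we have $\phi(e_s) = 1$ if $u_\phi$ extends $s$ and $\phi(e_s) = 0$ otherwise. Since these $e_s$ are exactly the indicator functions of open cosets and each takes values in $\{0,1\}$, the functional $\phi$ acts on a generic element $f = \sum_i \lambda_i e_{s_i}$ of $\A$ by selecting those indices with $s_i \subseteq u_\phi$ and summing the corresponding coefficients.

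First I would fix $f = \sum_{i \leqslant n} \lambda_i e_{s_i} \in \A$ and set $M = \lVert f \rVert_\infty = \sup_{g \in G} \lvert f(g) \rvert$. The goal is to produce $\lvert \phi(f) \rvert \leqslant M$. By the description above, $\phi(f) = \sum_{s_i \subseteq u_\phi} \lambda_i$. The natural strategy is to realize this partial sum as the evaluation $f(g)$ of $f$ at a single well-chosen group element $g \in G$, since every such evaluation is automatically bounded by $M$. Concretely, I would want a $g \in G$ that extends exactly the $s_i$ that $u_\phi$ extends, and extends none of the others that appear in the expansion of $f$. This is precisely the separation property that Lemma \ref{keylem} provides, and indeed the proof of surjectivity in Corollary \ref{spa} already constructs such a $g_0$: grouping the domains lying inside $\dom(u_\phi)$ into $A$ and letting $I$ collect the indices whose domain escapes $\acl(A)$, the lemma yields $g_0 \in G$ extending $u_{\phi \vert A}$ but none of the $s_i$ for $i \in I$, and one verifies $s_i \subseteq g_0 \iff s_i \subseteq u_\phi$ for all $i$.

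With such a $g_0$ in hand, I would compute
\begin{equation*}
\phi(f) = \sum_{s_i \subseteq u_\phi} \lambda_i = \sum_{s_i \subseteq g_0} \lambda_i = \sum_{i \leqslant n} \lambda_i e_{s_i}(g_0) = f(g_0),
\end{equation*}
so that $\lvert \phi(f) \rvert = \lvert f(g_0) \rvert \leqslant \lVert f \rVert_\infty = M$. Since $f$ was arbitrary, this shows $\lVert \phi \rVert \leqslant 1$, and in particular $\phi$ is bounded. The positivity claim from the introductory statement then follows from boundedness together with the fact that $\phi$ is multiplicative and respects the $\{0,1\}$-valued idempotent structure, so it extends to the completion $\Hilb(G)$ as a character of a commutative C*-algebra, which is automatically positive.

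The main obstacle I anticipate is purely that the separation argument of Lemma \ref{keylem} requires the domains involved to escape the algebraic closure $\acl(\dom(s))$; one must carefully partition the indices so that the Neumann-type genericity applies only where it legitimately can, while the indices with $\dom(s_i) \subseteq \dom(u_\phi)$ are handled directly by the fact that $g_0$ agrees with $u_\phi$ on $A$ and hence extends $s_i$ precisely when $u_\phi$ does. This bookkeeping is exactly what was already carried out in the proof of Corollary \ref{spa}, so the real content of the theorem is the observation that evaluating $f$ at the separating element $g_0$ recovers $\phi(f)$ — after which the bound $\lvert \phi(f) \rvert \leqslant \lVert f \rVert_\infty$ is immediate and no further estimate is needed.
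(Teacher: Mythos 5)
Your proposal is correct and follows essentially the same route as the paper: the same construction of $g_0$ via Lemma \ref{keylem} (with the same sets $A$ and $I$ as in Corollary \ref{spa}) and the same key identity $\phi(f) = f(g_0)$. The only cosmetic difference is at the very end, where you bound $\lvert\phi(f)\rvert \leqslant \lVert f\rVert_\infty$ directly, while the paper first deduces positivity from $\phi(f)=f(g_0)$ and then invokes the standard fact that positivity implies continuity; both conclusions are immediate from the same identity.
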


\begin{proof}

    We will show the positivity of $\phi$. To that aim, fix $f \in \A$. The exist $s_1,...,s_n\in \FP(\M_G)$ and $\lambda_1,...,\lambda_n \in \C$ such that $f= \sum_{i\leqslant n}{\lambda_i e_{s_i}}$. Recalling the identification $G = \Aut(\M_G)$ from Remark \ref{coset}, we set up for the application of Lemma \ref{keylem}. 
    
    Let $A = \bigcup\{\dom(s_i),\ i\leqslant n,\ \dom(s_i)\subseteq \dom(u_\phi)\}$ which is finite and let $I = \{i \leqslant n, \ \dom(s_i) \nsubseteq ~\acl(A)\}$.
    By Lemma \ref{keylem}, there exists $g_0 \in G$ that extends $u_{\phi|A}$ but none of the $s_i$ for $i\in I$. Exactly as in the proof of Corollary \ref{spa}, $g_0$ in fact satisfies:
    \begin{equation}\label{choix}
    \forall i\leqslant n, \ \left[ s_i\subseteq g_0 \iff s_i\subseteq u_\phi \right].
    \end{equation}

    It follows from (\ref{choix}) and the properties of $u_\phi$ that:
    $$\phi(f) = \sum_{i\leqslant n}{\lambda_i \phi(e_{s_i})} = \sum_{s_i\subseteq u_\phi}{\lambda_i} = \sum_{s_i\subseteq g_0}{\lambda_i} = f(g_0).$$
    In particular, if $f$ is such that $\forall g \in G, \ f(g) \geqslant 0$, then $\phi(f) \geqslant 0$. Hence $\phi$ is positive in a strong sense which is well-known to imply continuity (simply note that for every $f \in \A, \ ||f||_\infty\cdot 1 \pm f$ is in $\A$ and only takes positives values).

\end{proof}
 A direct consequence of this result is that the map from Corollary \ref{spa} lifts to an homeomorphism $\Hc(G) \longrightarrow \Pa(\M_G)$.

The Hilbert compactification comes with a semi-topological monoid structure. The composition law is a form of convolution. Sometimes seen as a variation of the \textit{Arens product}, it also appears in \cite{Bochner_1942}. It is built as follows:

For $\phi \in \Hc(G)$, one can define a map $\underline{\phi} \colon \A \longrightarrow \A$ by setting
    $$\forall f \in \A, \forall g \in G, \ \underline{\phi}(f)(g) = \phi(g^{-1} \cdot f)$$
To see that it is well defined, note that for $g \in G$ and $s \in \FP(\M_G)$, we have $\phi(g^{-1}\cdot e_s) = \phi(e_{g^{-1} \circ s}) = 1$ if $u_\phi$ extends $g^{-1} \circ s $ and $0$ otherwise. In other terms, 
    \begin{equation}\label{convocalc}
        \underline{\phi}(e_s) =  \left\{
    \begin{array}{ll}
        e_{s\circ (u_\phi^{-1})} &\text{ if } \dom(s) \subseteq \dom(u_\phi), \\
        0  &\mbox{otherwise.}
    \end{array}
\right.
    \end{equation}
Thus, in both cases, $\underline{\phi}(e_s)$ is an element of $\A$. Next, given $\phi$ and $\psi$ in $\Hc(G)$, we can define a map $\phi*\psi\colon \A \longrightarrow \C$ by setting $ \phi*\psi = \phi\circ \underline{\psi}$.

Then, given $s \in \FP(\M_G)$, using (\ref{convocalc}) we obtain: 
$$\phi*\psi(e_s) = \phi\circ \underline{\psi}(e_s)) = \left\{
    \begin{array}{ll}
        1 & \mbox{if } s \subseteq u_\phi \circ u_\psi, \\
        0 & \mbox{otherwise.}
    \end{array}
\right.$$ 
By Corollary \ref{spa}, $\phi*\psi$ is an element of $\spa$ which, by Theorem \ref{auto}, uniquely extends to an element of $\Hc(G)$. Moreover, we obtained along the way that the map $\phi \longmapsto u_\phi$ sends convolution in $\Hc(G)$ to composition where defined in $\Pa(\M_G)$. 

The Hilbert compactification also has an involution $\phi \longmapsto \phi^*$, which is given by:
$$\forall f\in \Hilb(G), \ \phi^*(f) = \overline{\phi(f^*)},$$
where $f^*(g) = \overline{f(g^{-1})}$ for every $f \in \Hilb(G)$ and $g \in G$. It also preserved by the map $\Pa(\M_G) \longrightarrow~\Hc(G)$. Indeed, if $s\in \FP(\M_G)$, we have:
$$\phi^*(e_s) = \overline{\phi(e_s^*)} = \phi(e_{s^{*}}) = \left\{
    \begin{array}{ll}
        1 & \mbox{if } s \subseteq u_\phi^*, \\
        0 & \mbox{otherwise,}
    \end{array}
\right.$$
hence $u_{\phi^*} =u_{\phi}^*$.

Consequently, we can identify the Hilbert compactification of $G$ with $\Pa(\M_G)$. We have in fact just proved the following (a similar statement obtained with a different approach appears as Theorem 0.3 in \cite{BenYaacov_2018}):

\begin{theo}\label{id1}
    The map $\Hc(G) \longrightarrow \Pa(\M_G)$ is an isomorphism of semi-topological monoidal compactifications of $G$.   
\end{theo}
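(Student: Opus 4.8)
The statement is essentially a synthesis of the computations already carried out in the discussion preceding it, so the plan is to assemble these into a single argument while making the compatibility with the structure maps from $G$ explicit. First I would recall that combining Corollary~\ref{spa} with the automatic continuity of Theorem~\ref{auto} already yields that $\phi \longmapsto u_\phi$ is a homeomorphism $\Hc(G) \longrightarrow \Pa(\M_G)$: by Theorem~\ref{auto} every multiplicative functional on $\A$ extends uniquely to $\Hilb(G)$, identifying $\spa$ with $\Hc(G)$, while Corollary~\ref{spa} identifies $\spa$ with $\Pa(\M_G)$. It then remains only to verify that this homeomorphism intertwines the monoid law, the involution, and the canonical embeddings of $G$.

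For the monoid structure, I would invoke the convolution computation above: for $\phi,\psi \in \Hc(G)$ and $s \in \FP(\M_G)$ one has $\phi * \psi(e_s) = 1$ exactly when $s \subseteq u_\phi \circ u_\psi$, so that $u_{\phi * \psi} = u_\phi \circ u_\psi$, the composition where defined in $\Pa(\M_G)$. The point deserving attention here is that $\phi * \psi$ is genuinely an element of $\Hc(G)$; this is precisely where Corollary~\ref{spa} (to recognize $\phi * \psi$ as a point of $\spa$) and Theorem~\ref{auto} (to extend it back to $\Hilb(G)$) must be applied once more. The neutral element $\operatorname{id}_{\M_G}$ of $\Pa(\M_G)$ corresponds under the map to the evaluation $\operatorname{ev}_e$ at the identity of $G$, and a direct check shows $\operatorname{ev}_e$ is the unit of $\Hc(G)$ for convolution, so units match.

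The involution is settled by the identity $u_{\phi^*} = u_\phi^*$ computed above. To conclude that we have an isomorphism of compactifications and not merely of semi-topological monoids, I would verify that the relevant triangle commutes. The canonical map $G \to \Hc(G)$ sends $g$ to $\operatorname{ev}_g$, and since $\operatorname{ev}_g(e_s) = e_s(g) = 1$ iff $g$ extends $s$, we obtain $u_{\operatorname{ev}_g} = g$ seen as a total automorphism; this is exactly the image of $g$ under the dense embedding $G = \Aut(\M_G) \hookrightarrow \Pa(\M_G)$ of Remark~\ref{topo}.

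Given the amount of preparation, no single step is a serious obstacle and the argument is largely bookkeeping; the crux of the whole section was really the automatic continuity of Theorem~\ref{auto}, which is what allows one to pass from $\spa$ to $\Hc(G)$ at all. The one point I would take care over is confirming that the separate continuity of composition and the continuity of the involution on $\Pa(\M_G)$ from Remark~\ref{topo} transport across the homeomorphism, so that the identification respects the full semi-topological $*$-monoidal structure and not just the underlying algebra.
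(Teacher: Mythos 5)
Your proposal is correct and follows the paper's own route: the paper likewise obtains the homeomorphism by combining Corollary~\ref{spa} with the automatic continuity of Theorem~\ref{auto}, and then reads off compatibility with convolution and the involution from the computations $u_{\phi*\psi}=u_\phi\circ u_\psi$ and $u_{\phi^*}=u_\phi^*$ given just before the theorem. Your additional explicit check that $u_{\operatorname{ev}_g}=g$, so that the identification commutes with the canonical embeddings of $G$, is left implicit in the paper but is a welcome piece of bookkeeping.
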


\section{The Tannaka monoid: Another description of \texorpdfstring{$\Hc(G)$}{H(G)}}\label{Tannaka}

In \cite{Tannaka}, Tannaka associated to a compact group $K$ a monoid $\T(K)$ of \textit{operations} on the class of representations of $K$. More explicitly, an element of $\T(K)$ is a family of operators $(u_\pi)_\pi$ where $\pi$ ranges over all the finite dimensional representations of $K$ and $u_\pi$ is an operator on same the Hilbert space as $\pi$. Moreover, the family must commute with intertwining operators and preserve the common operations on representations: sum, tensor product and conjugation. The details can be found in \cite{Chevalley_1999} or \cite{Hewitt_1970}. A more recent treatment of this approach using efficient terminology from category theory appears in \cite{Joyal_1991}. Inspired by this take on Tannaka's duality, we carry a similar construction for Roelcke-precompact non-archimedean Polish groups. While Tannaka showed in the compact case that $\T(K)$ is a compact group canonically isomorphic to $K$, we will obtain a *-monoid canonically isomorphic to the Hilbert compactification.

Given a topological group $G$, we will denote by $\Rep(G)$ the category whose objects are representations of $G$ that split as a finite sum of irreducibles and whose morphisms are \textit{intertwining operators}. $\B'(G)$ will denote the algebra of matrix coefficients of $G$ arising from representations in $\Rep(G)$. Recall that $\widehat{G}$ denotes the unitary dual of $G$, i.e. the set of equivalence classes of irreducible unitary representations of $G$.

\begin{prop}\label{preli}
    Let $G$ be a Roelcke-precompact non-archimedean Polish group. The following properties hold:
    
    \begin{enumerate}
        \item $\Rep(G)$ is stable under tensor product of representations.
        \item Only countably many equivalence classes of representations appear in $\Rep(G)$.
        \item $\A \subseteq \B'(G)$.
    \end{enumerate}
\end{prop}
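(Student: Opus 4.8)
The plan is to read off all three items from Lemma~\ref{tensor}, which already supplies the two structural inputs I need: every irreducible embeds in some quasi-regular representation $\lambda_{G/V}$, and both tensor products of irreducibles and the $\lambda_{G/V}$ themselves decompose as \emph{finite} sums of irreducibles. For Item~(1) I would distribute the tensor product over finite direct sums. Writing $\pi \simeq \bigoplus_{i} \pi_i$ and $\pi' \simeq \bigoplus_{j} \pi'_j$ as finite sums of irreducibles, one has $\pi \otimes \pi' \simeq \bigoplus_{i,j} \pi_i \otimes \pi'_j$; by Lemma~\ref{tensor}(2) each summand $\pi_i \otimes \pi'_j$ is a finite sum of irreducibles, and a finite sum of such is again a finite sum of irreducibles, so $\pi \otimes \pi' \in \Rep(G)$.

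For Item~(2) I would first check that $\widehat{G}$ is countable. By Lemma~\ref{tensor}(1) every irreducible is a subrepresentation of some $\lambda_{G/V}$ with $V$ an open subgroup; there are only countably many open subgroups by Remark~\ref{coset}(1), and each $\lambda_{G/V}$ splits into finitely many irreducibles, as established in the proof of Lemma~\ref{tensor}. Hence $\widehat{G}$ is countable. Since every object of $\Rep(G)$ is a finite direct sum of irreducibles, the map sending a finite sequence of elements of $\widehat{G}$ to the direct sum of the corresponding irreducibles surjects onto the isomorphism classes occurring in $\Rep(G)$; its domain being countable, so is its image.

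For Item~(3) the point is to realize each generator $f_{x,y}$ of $\A$ as a matrix coefficient of an object of $\Rep(G)$. I would fix $x,y \in \M_G$ and write $x = aU$ with $U$ an open subgroup. If $y \notin G\cdot x$, then $f_{x,y} = 0 \in \B'(G)$. Otherwise $y$ lies in the orbit $G\cdot x = \{gaU : g\in G\}$, which is exactly the set of left cosets of $U$; hence the cyclic subrepresentation of $\Lambda_G$ generated by $\delta_x$ is the closed span $\overline{\operatorname{span}}\{\delta_z : z \in G\cdot x\} \simeq \lambda_{G/U}$, and it contains both $\delta_x$ and $\delta_y$. Since $\lambda_{G/U}$ is a finite sum of irreducibles it lies in $\Rep(G)$, and the identity $f_{x,y}(g) = \langle \lambda_{G/U}(g)\delta_x, \delta_y\rangle$ exhibits $f_{x,y}$ as one of its matrix coefficients, so $f_{x,y} \in \B'(G)$. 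As $\B'(G)$ is closed under linear combinations (a sum of matrix coefficients of $\pi_1, \pi_2 \in \Rep(G)$ being a matrix coefficient of $\pi_1 \oplus \pi_2 \in \Rep(G)$), the span $\A$ is contained in $\B'(G)$.

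Since Lemma~\ref{tensor} carries the analytic weight, the only genuinely new step is the orbit computation in Item~(3) identifying the cyclic representation generated by $\delta_x$ with the quasi-regular representation $\lambda_{G/U}$ and checking that $\delta_y$ falls into this same cyclic piece. I expect this to be the main, though still mild, obstacle.
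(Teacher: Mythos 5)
Your proof is correct and follows essentially the same route as the paper: all three items are read off Lemma~\ref{tensor} (stability of irreducibles under tensor product, embedding of irreducibles into quasi-regular representations, and the finite decomposition of each $\lambda_{G/V}$). The only, harmless, divergence is in Item~(2), where the paper deduces countability of $\widehat{G}$ from the separability of $\ell^2(\M_G)$ by embedding $\bigoplus_{\lambda\in\widehat{G}}\pi_\lambda$ into $\Lambda_G$, whereas you cover $\widehat{G}$ by the countably many finite sets of irreducible constituents of the representations $\lambda_{G/V}$; both arguments rest on the same inputs.
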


\begin{proof}
\; 

\begin{enumerate}
    \item Follows directly from Lemma \ref{tensor}.

    \item By definition of $\Rep(G)$ it suffices to see that $\widehat{G}$ is countable. For every $\lambda \in \widehat{G}$, fix a representative $\pi_\lambda$. We deduce from Fact \ref{oligorep} that:
    $$\bigoplus_{\lambda\in\widehat{G}} \pi_\lambda \leqslant \Lambda_G.$$

    Since $\Lambda_G$ is defined on $\ell^2(\M_G)$ where $\M_G$ is countable, $\widehat{G}$ must be countable.

    \item We have seen at the end of the proof of Lemma \ref{tensor} that $\ell^2(G/U)$ splits as a finite sum of irreducible subrepresentations for every open subgroup $U$ of $G$. Since $\A$ is generated by matrix coefficient arising from such representations, this proves the claim.
\end{enumerate}

\end{proof}

Let $\U_G$ be the forgetful functor from $\Rep(G)$ to the category of Hilbert spaces and let $\Nat(\U_G)$ denote the class of natural transformations of $\U_G$. Explicitly, an element of $\Nat(\U_G)$ is a family $u=(u_\pi)_{\pi\in \Rep(G)}$ where for every representation $\pi \in \Rep(G)$, $u_\pi$ is a bounded operator on $\Hi_\pi$ with the following condition. For every pair of representations $\pi_1,\pi_2\in \Rep(G)$ and every intertwining operator $h\in \Hom(\pi_1,\pi_2)$, the following diagram is commutative:

\begin{center}
\begin{tikzcd}[column sep=large, row sep=large]
    \Hi_{\pi_1} \arrow[r, "u_{\pi_1}"] \arrow[d, "h"'] &\Hi_{\pi_1} \arrow[d, "h"]\\
    \Hi_{\pi_2} \arrow[r, "u_{\pi_2}"] & \Hi_{\pi_2}
\end{tikzcd}
\end{center}

It is easily seen that $\Nat(\U_G)$ is a complex algebra under coordinatewise sum and composition. Moreover, elements of $\Nat(\U_G)$ behave well with subrepresentations:

\begin{lem}\label{stable}
    Let $G$ be a Roelcke-precompact non-archimedean Polish group and let $u\in \Nat(\U_G)$.

    \begin{enumerate}
        \item Let $\pi \in \Rep(G)$ and let $F$ be a closed subspace of $\Hi_\pi$ that is stable under the action of $G$. Then $F$ is also stable under $u_\pi$.

        \item Let $\pi_1, \pi_2 \in \Rep(G)$. Then:
        $$u_{\pi_1 \oplus \pi_2} = u_{\pi_1} \oplus u_{\pi_2}.$$
    \end{enumerate}
\end{lem}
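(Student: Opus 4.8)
The plan is to exploit the unitarity of the representations in $\Rep(G)$: orthogonal projections onto invariant subspaces, together with the canonical inclusions and coordinate projections of a direct sum, are all intertwining operators, hence morphisms in $\Rep(G)$. The naturality condition defining $u\in\Nat(\U_G)$ then forces $u_\pi$ to interact correctly with each of them. No property specific to Roelcke-precompact groups is needed here beyond what guarantees that $\Rep(G)$ is closed under the operations involved; the argument is essentially formal bookkeeping with naturality squares, and the only genuine input is the observation that unitarity turns the relevant projections into intertwiners — which is exactly what unlocks both items, so I do not expect a serious obstacle.

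For Item 1, I would first note that if $F\subseteq\Hi_\pi$ is a closed $G$-invariant subspace, then unitarity of $\pi$ makes $F^\perp$ also $G$-invariant, so the orthogonal projection $P\colon\Hi_\pi\to\Hi_\pi$ onto $F$ commutes with $\pi(g)$ for every $g\in G$. Thus $P\in\Hom(\pi,\pi)$ is an endomorphism in $\Rep(G)$. Feeding $h=P$ into the naturality square of $u$ yields $P\circ u_\pi=u_\pi\circ P$, i.e. $u_\pi$ commutes with $P$. Since $F$ is exactly the fixed-point set of $P$, any $\xi\in F$ satisfies $u_\pi\xi=u_\pi P\xi=Pu_\pi\xi\in F$, giving $u_\pi(F)\subseteq F$ as desired.

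For Item 2, put $\pi=\pi_1\oplus\pi_2$, which again lies in $\Rep(G)$ since a finite sum of finite sums of irreducibles is a finite sum of irreducibles. The canonical inclusions $\iota_i\colon\Hi_{\pi_i}\to\Hi_\pi$ and coordinate projections $p_i\colon\Hi_\pi\to\Hi_{\pi_i}$ (for $i=1,2$) are intertwining operators, hence morphisms in $\Rep(G)$, so naturality applied to each $\iota_i$ gives $u_\pi\circ\iota_i=\iota_i\circ u_{\pi_i}$. Using the resolution of the identity $\operatorname{id}_{\Hi_\pi}=\iota_1 p_1+\iota_2 p_2$, I would then compute
$$u_\pi=u_\pi(\iota_1 p_1+\iota_2 p_2)=\iota_1 u_{\pi_1}p_1+\iota_2 u_{\pi_2}p_2,$$
which is precisely the block-diagonal operator $u_{\pi_1}\oplus u_{\pi_2}$. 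Alternatively, one can deduce block-diagonality directly from Item 1 — both summands being $G$-invariant and hence stable under $u_\pi$ — and then identify the diagonal blocks through the relation $u_\pi\circ\iota_i=\iota_i\circ u_{\pi_i}$, which pins down the restriction of $u_\pi$ to each summand as $u_{\pi_i}$.
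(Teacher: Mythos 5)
Your proof is correct and follows essentially the same route as the paper: Item 1 via the orthogonal projection onto $F$ being an intertwiner (thanks to unitarity) and the naturality square, and Item 2 via the canonical inclusions being intertwiners. The only cosmetic difference is that you additionally invoke the coordinate projections and the resolution of the identity to assemble the block-diagonal form, whereas the paper leaves that last bookkeeping step implicit after noting the commuting square for the inclusions.
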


\begin{proof}
\;

\begin{enumerate}
    \item Let $p_F \colon \Hi_\pi \longrightarrow \Hi_\pi$ be the orthogonal projection on $F$. Then $F$ is stable under the action of $G$ if and only if $p_F$ commutes with the action of $G$. In particular, if $F$ is $G$-stable then $u_\pi$ must also commute with $p_F$, i.e. $u_\pi(F) \subseteq F$.

    \item Let $\pi = \pi_1 \oplus \pi_2$. By definition, $\Hi_{\pi} = \Hi_{\pi_1} \oplus \Hi_{\pi_2}$. Moreover, the inclusion maps $\Hi_{\pi_1} \longrightarrow \Hi_\pi$ are intertwining operators. Thus, the following diagram must be commutative, which proves the claim:
    \begin{center}
    \begin{tikzcd}[column sep=large, row sep=large]
        \Hi_{\pi_1} \arrow[d] \arrow[r,"u_{\pi_1}"] &\Hi_{\pi_1} \arrow[d]\\
        \Hi_\pi \arrow[r, "u_\pi"]               &\Hi_\pi\\
        \Hi_{\pi_2} \arrow[u] \arrow[r, "u_{\pi_2}"] & \Hi_{\pi_2} \arrow[u]
    \end{tikzcd}
    \end{center}

\end{enumerate}
    
\end{proof}

The following is essentially Proposition 4 in \cite{Joyal_1991}. The proof is the same but we reproduce it for completeness. If $\Hi$ is Hilbert space, $\Li(\Hi)$ will denote the space of bounded operators $\Hi \longrightarrow \Hi$ and $\Li_1(\Hi)$ the subset of those operators with norm at most $1$.

\begin{prop}
    Let $G$ be a Roelcke-precompact non-archimedean Polish group. For every $\lambda \in \widehat{G}$, fix $\pi_\lambda \colon G \to \Un(\Ki_\lambda)$ a representative of $\lambda$. Then, the restriction map

    \begin{equation*}
    q\colon  \Nat(\U_G) \longrightarrow \prod_{\lambda \in \widehat{G}} \Li(\Ki_\lambda), \qquad
    u  \longmapsto (u_{\pi_\lambda})_{\lambda \in \widehat{G}}
    \end{equation*}
is a bijective correspondence that respects the algebra structures. In particular, $\Nat(\U)$ is a set.
    
\end{prop}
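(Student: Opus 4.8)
The plan is to verify in turn that $q$ is an algebra homomorphism, that it is injective, and that it is surjective, the last point being the substantial one. That $q$ respects the algebra structure is immediate: the sum and composition on $\Nat(\U_G)$ are defined coordinatewise, so extracting the coordinates indexed by the chosen representatives $\pi_\lambda$ preserves both operations. The concluding assertion that $\Nat(\U_G)$ is a set will follow formally once $q$ is shown bijective, since $\widehat{G}$ is countable by Proposition \ref{preli}, so the target $\prod_{\lambda} \Li(\Ki_\lambda)$ is a genuine set.

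For injectivity I would argue that the value $u_\pi$ of a natural transformation on an arbitrary $\pi \in \Rep(G)$ is entirely determined by the family $(u_{\pi_\lambda})_{\lambda \in \widehat{G}}$. By Fact \ref{oligorep}, $\Hi_\pi$ decomposes as a finite orthogonal direct sum of irreducible $G$-invariant subspaces $F_i$; by Lemma \ref{stable} each $F_i$ is $u_\pi$-stable, so $u_\pi$ is block-diagonal for this decomposition. Each $\pi|_{F_i}$ is isomorphic to some $\pi_{\lambda_i}$ via a unitary intertwiner $h_i \colon \Ki_{\lambda_i} \to F_i$, which we view as a morphism into $\pi$; naturality then forces $u_\pi|_{F_i} = h_i \circ u_{\pi_{\lambda_i}} \circ h_i^{-1}$. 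Hence $q(u) = q(v)$ implies $u = v$.

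For surjectivity, given an arbitrary family $(v_\lambda)_{\lambda} \in \prod_{\lambda} \Li(\Ki_\lambda)$, I would construct $u \in \Nat(\U_G)$ with $u_{\pi_\lambda} = v_\lambda$. The naive recipe, namely to pick a decomposition of $\Hi_\pi$ into irreducibles and declare $u_\pi$ to act as the transported $v_\lambda$ on each piece, is the right idea but ambiguous as stated. To make it canonical I would route everything through the multiplicity spaces $\Hom(\pi_\lambda, \pi)$, which by Schur's Lemma and semisimplicity are finite-dimensional, together with the canonical evaluation isomorphism of representations $\bigoplus_{\lambda} \Ki_\lambda \otimes \Hom(\pi_\lambda, \pi) \to \Hi_\pi$ sending $\xi \otimes h \mapsto h(\xi)$, where $G$ acts through $\pi_\lambda$ on the first factor and trivially on the second. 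Setting $u_\pi$ to be $\bigoplus_{\lambda} (v_\lambda \otimes \mathrm{id})$ under this identification defines $u_\pi$ with no arbitrary choice. Naturality is then checked by observing that a morphism $h \in \Hom(\pi_1, \pi_2)$ induces, by post-composition, a map $\Hom(\pi_\lambda, \pi_1) \to \Hom(\pi_\lambda, \pi_2)$ of multiplicity spaces against which $v_\lambda \otimes \mathrm{id}$ is visibly equivariant, and restricting to $\pi = \pi_\lambda$ recovers $u_{\pi_\lambda} = v_\lambda$.

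The main obstacle is precisely this well-definedness in the surjectivity step: decompositions into irreducibles are highly non-unique, so a direct pointwise construction is genuinely ambiguous. The purpose of passing to the canonical isotypic decomposition $\Ki_\lambda \otimes \Hom(\pi_\lambda, \pi)$ is to eliminate that ambiguity while simultaneously making naturality transparent, since both the decomposition and the action of $u_\pi$ on it are manifestly functorial in $\pi$.
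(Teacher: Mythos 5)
Your proof is correct and follows essentially the same route as the paper: injectivity via Lemma \ref{stable} and the decomposition into finitely many irreducibles, and surjectivity via the canonical isotypic decomposition $\bigoplus_\lambda \Ki_\lambda \otimes \Hom_G(\pi_\lambda,\pi)$ with $u_\pi = \bigoplus_\lambda (v_\lambda \otimes \mathrm{id})$, naturality being checked through the induced maps on multiplicity spaces. Your remark identifying the non-uniqueness of irreducible decompositions as the main obstacle, resolved by the canonical multiplicity-space formulation, is exactly the point of the paper's argument.
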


\begin{proof}
Lemma \ref{stable} together with the fact that our representations are finite sums of irreducibles show that an element $u$ is determined by its image under $q$. In particular, $q$ is injective. For surjectivity, fix $(t_\lambda) \in \prod_{\lambda \in \widehat{G}} \B(\Ki_\lambda)$. We will define a pre-image $u$ for $(t_\lambda)$. 

Let $\pi \colon G \to \Un(\Hi)$ be an element of $\Rep(G)$. There is a unique \textit{isotypical} decomposition $\Hi = ~\bigoplus_{\lambda} \Hi_\lambda$ into closed invariant subspaces where for every $\lambda \in \widehat{G}$, any irreducible subrepresentation of $\Hi_\lambda$ is isomorphic to $\pi_\lambda$. Since $\pi \in \Rep(G)$, the space $\Hom_G(\Ki_\lambda, \Hi_\lambda)$ is finite dimensional (this is an application of Schur's Lemma). Thus, there is a canonical $G$-equivariant isomorphism:
$$\Psi_\lambda\colon  \Ki_\lambda \otimes \Hom_G(\Ki_\lambda, \Hi_\lambda) \longrightarrow \Hi_\lambda.$$

Using $\Psi_\lambda$, we can define $u_\pi$ as follows:
$$u_\pi = \bigoplus_{\lambda} \left[ \Psi_\lambda \circ (t_\lambda \otimes 1 ) \circ \Psi_\lambda ^{-1}\right]$$

To see that it defines a natural transformation of $\U_G$, fix $\pi, \pi'\in \Rep(G)$ and write $\Hi = \Hi_\pi$ and $\Hi'= \Hi_{\pi'}$. Let $h\in \Hom(\pi,\pi')$. Recalling the classical general fact that $\Hom(\pi_\lambda,\pi_\mu) = 0$ if $\lambda \neq \mu \in \widehat{G}$, we see that $h$ preserves the isotypical decompositions of $\pi$ and $\pi'$:
$$\forall \lambda \in \widehat{G}, \ h(\Hi_\lambda) \subseteq \Hi'_\lambda$$
with the above notations. Moreover, these decompositions are also preserved by $u_\pi$ and $u_{\pi'}$ by construction. Hence, we can reduce to the case $\Hi = \Hi_\lambda$ and $\Hi' = \Hi'_\lambda$ for a single $\lambda \in \widehat{G}$, which leaves us with the following diagram:

    \begin{center}
    \begin{tikzcd}[column sep=large, row sep=large]
       \Hi_\lambda \arrow[d,"h"] \arrow[r,"\Psi_\lambda^{-1}"] 
            & \Ki_\lambda \! \otimes \! \Hom_G(\Ki_\lambda,\Hi_\lambda) \arrow[r,"t_\lambda \otimes 1"] \arrow[d,"1\otimes (h\circ \cdot)",shift right = 11]
                & \Ki_\lambda \hspace{-1mm} \otimes \hspace{-0,6mm} \Hom_G(\Ki_\lambda,\Hi_\lambda) \arrow[r,"\Psi_\lambda"] \arrow[d,"1\otimes (h\circ \cdot)",shift right = 11]
                    & \Hi_\lambda \arrow[d,"h"]\\
       \Hi'_\lambda  \arrow[r,"\Psi_\lambda^{'-1}"] 
            & \Ki_\lambda \! \otimes \! \Hom_G(\Ki_\lambda,\Hi'_\lambda)  \arrow[r,"t_\lambda\otimes 1"] 
                & \Ki_\lambda \! \otimes\! 
                \Hom_G(\Ki_\lambda,\Hi'_\lambda)  \arrow[r,"\Psi'_\lambda"]
                    & \Hi'_\lambda
    \end{tikzcd}
    \end{center}

    To conclude, we need to prove that the greatest square is commutative. But the three smallest squares are easily seen to be commutative, which is enough.
\end{proof}

Consider the weak operator topology on $\Li(\Hi)$ for every representation $G$. Then we can endow $\prod_{\lambda \in \widehat{G}} \Li(\Hi_\lambda)$ with the product topology. $\Nat(\U_G)$ can also be endowed with the coarsest topology for which the projection maps $u\longmapsto u_\pi$ are continuous for every representation $\pi$ of $G$. Then $q$ is also a homeomorphism for those topologies.

Tensor product is the last ingredient we need to add in order to define our version of the Tannaka monoid:

\begin{defi}\label{Tmonoid}
    The \textit{Tannaka monoid} $\T(G)$ of a Roelcke-precompact non-archimedean Polish group $G$ is the set of elements $u\in \Nat(\U_G)$ that are non-zero and commute with tensor product, i.e. such that:
    $$u_\mathbf{1}=\operatorname{Id}_\C \quad \text{and} \quad \forall \pi,\pi'\in \Rep(G),\ u_{\pi \otimes \pi'} = u_\pi \otimes u_{\pi'}$$
    where $\mathbf{1}$ denotes the trivial representation of $G$ on $\C$. The operation is coordinatewise composition: for every $u,v\in \T(G),\ (u\circ v)_\pi = u_\pi \circ v_\pi$. It also admits an involution $u\longmapsto u^*$ given by coordinatewise adjunction. It is endowed with the induced topology as a subspace of $\Nat(\U_G)$.
\end{defi}

Note that every $g$ in $G$ naturally defines an element $u^g\in \T(G)$. In the compact case, Tannaka essentially showed that this map is a group homeomorphism. In our context, we obtain the Hilbert compactification of $G$ again:

\begin{theo}
    Let $G$ be a Roelcke-precompact non-archimedean Polish group. Then $\T(G)$ is a compact semi-topological *-monoid isomorphic to the Hilbert compactification of $G$.
\end{theo}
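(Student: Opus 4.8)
The plan is to produce an explicit isomorphism $\Theta\colon\Hc(G)\to\T(G)$ built from matrix coefficients, realizing the classical correspondence between characters of the algebra of representative functions and tensor-preserving natural transformations. Given $\phi\in\Hc(G)$, i.e. a $*$-character of the unital commutative $C^*$-algebra $\Hilb(G)$, and a representation $\pi\in\Rep(G)$, I would consider the sesquilinear form $(\xi,\eta)\mapsto\phi(f^\pi_{\xi,\eta})$, where $f^\pi_{\xi,\eta}(g)=\langle\pi(g)\xi,\eta\rangle$ is the associated matrix coefficient, an element of $\B(G)\subseteq\Hilb(G)$. Since $\|f^\pi_{\xi,\eta}\|_\infty\leqslant\|\xi\|\,\|\eta\|$ and $\|\phi\|\leqslant 1$, this form is bounded and yields a contraction $u^\phi_\pi\in\Li_1(\Hi_\pi)$ defined by $\langle u^\phi_\pi\xi,\eta\rangle=\phi(f^\pi_{\xi,\eta})$. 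I would first check that $u^\phi=(u^\phi_\pi)_\pi$ is a natural transformation: for an intertwiner $h\in\Hom(\pi,\pi')$ one has the pointwise identity $f^\pi_{\xi,h^*\eta'}=f^{\pi'}_{h\xi,\eta'}$, so applying $\phi$ gives $h\,u^\phi_\pi=u^\phi_{\pi'}h$. Multiplicativity of $\phi$ is precisely what forces the family to preserve tensor products: from the pointwise identity $f^{\pi\otimes\pi'}_{\xi\otimes\xi',\eta\otimes\eta'}=f^\pi_{\xi,\eta}\cdot f^{\pi'}_{\xi',\eta'}$ together with $\phi(1)=1$ one deduces $u^\phi_{\pi\otimes\pi'}=u^\phi_\pi\otimes u^\phi_{\pi'}$ and $u^\phi_{\triv}=\operatorname{Id}_\C$. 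In particular $u^\phi$ is non-zero and lies in $\T(G)$.

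Next I would verify that $\Theta$ is a homomorphism of $*$-monoids and a topological embedding. Using the convolution $\underline{\psi}$ of Section~\ref{Krein}, a direct computation yields $\underline{\psi}(f^\pi_{\xi,\eta})=f^\pi_{u^\psi_\pi\xi,\eta}$, whence $(\phi*\psi)(f^\pi_{\xi,\eta})=\phi(f^\pi_{u^\psi_\pi\xi,\eta})=\langle u^\phi_\pi u^\psi_\pi\xi,\eta\rangle$, i.e. $\Theta(\phi*\psi)=\Theta(\phi)\circ\Theta(\psi)$; likewise $(f^\pi_{\xi,\eta})^*=f^\pi_{\eta,\xi}$ gives $\Theta(\phi^*)=\Theta(\phi)^*$. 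Continuity of $\Theta$ is immediate: for fixed $\lambda,\xi,\eta$ the map $\phi\mapsto\langle u^\phi_{\pi_\lambda}\xi,\eta\rangle=\phi(f^{\pi_\lambda}_{\xi,\eta})$ is weak$^*$-continuous, and the topologies on $\Hc(G)$ and on $\Nat(\U_G)\cong\prod_\lambda\Li(\Ki_\lambda)$ are generated by such evaluations. Injectivity follows because the matrix coefficients $f^\pi_{\xi,\eta}$, $\pi\in\Rep(G)$, span $\B'(G)\supseteq\A$, which is dense in $\Hilb(G)$ by Fact~\ref{oligorep}; two characters inducing the same $u^\phi$ agree on a dense subalgebra, hence coincide.

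The crux is surjectivity. Given $u\in\T(G)$, I would introduce a candidate $\phi_u$ on $\B'(G)$ by $\phi_u(f^\pi_{\xi,\eta})=\langle u_\pi\xi,\eta\rangle$ and first establish well-definedness. Absorbing scalars into vectors and passing to finite direct sums, where $u$ splits by Lemma~\ref{stable}, this reduces to the implication $f^\pi_{\xi,\eta}=0\Rightarrow\langle u_\pi\xi,\eta\rangle=0$. Decomposing $\pi$ into isotypic components and transporting along the isomorphisms $\Psi_\lambda$ to the blocks $\Ki_\lambda\otimes\Hom_G(\Ki_\lambda,\Hi_\lambda)$, one writes both $f^\pi_{\xi,\eta}(g)=\sum_\lambda\operatorname{tr}(\pi_\lambda(g)S_\lambda)$ and $\langle u_\pi\xi,\eta\rangle=\sum_\lambda\operatorname{tr}(u_{\pi_\lambda}S_\lambda)$ with the same finite-rank operators $S_\lambda$; since finitely many pairwise inequivalent irreducibles generate $\bigoplus_\lambda\Li(\Ki_\lambda)$ as a von Neumann algebra (Schur's lemma and the bicommutant theorem), the vanishing of all traces $\operatorname{tr}(\pi_\lambda(g)S_\lambda)$ forces each $S_\lambda=0$, and both sides vanish. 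The tensor computation of the first paragraph then shows $\phi_u$ is multiplicative and unital. Now $\phi_u|_\A$ is a multiplicative linear functional on $\A$, so Theorem~\ref{auto} applies: it is automatically continuous and extends to some $\bar\phi\in\Hc(G)$. To see $\Theta(\bar\phi)=u$, I would argue indirectly, since $\phi_u$ is not known to be continuous on all of $\B'(G)$: for every open subgroup $U$ one has $\lambda_{G/U}\in\Rep(G)$ (proof of Lemma~\ref{tensor}), and for basis vectors the coefficient $f^{\lambda_{G/U}}_{\delta_x,\delta_y}$ is a generator $e_s$ of $\A$, so $\langle\Theta(\bar\phi)_{\lambda_{G/U}}\delta_x,\delta_y\rangle=\bar\phi(e_s)=\phi_u(e_s)=\langle u_{\lambda_{G/U}}\delta_x,\delta_y\rangle$, giving $\Theta(\bar\phi)_{\lambda_{G/U}}=u_{\lambda_{G/U}}$. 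As every irreducible $\pi_\lambda$ embeds in some $\lambda_{G/U}$ (Lemma~\ref{tensor}), Lemma~\ref{stable} lets both natural transformations restrict to $\Hi_{\pi_\lambda}$, so they agree on all irreducibles, hence everywhere through the identification $\Nat(\U_G)\cong\prod_\lambda\Li(\Ki_\lambda)$.

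Finally, $\Hc(G)$ is compact (the Gel'fand spectrum of a unital commutative $C^*$-algebra) and $\T(G)$ is Hausdorff as a subspace of $\prod_\lambda\Li(\Ki_\lambda)$, so the continuous bijection $\Theta$ is automatically a homeomorphism. Since it intertwines convolution with composition and conjugation with adjunction, it is an isomorphism of $*$-monoids, and transporting the structure of $\Hc(G)$ described in Section~\ref{Krein} shows that $\T(G)$ is a compact semi-topological $*$-monoid isomorphic to the Hilbert compactification. I expect the principal obstacle to be the surjectivity step, and within it the verification $\Theta(\bar\phi)=u$ in the absence of continuity of $\phi_u$ on $\B'(G)$: the argument must pass through the generators $e_s$ of $\A$, where Theorem~\ref{auto} guarantees $\bar\phi=\phi_u$, and only then propagate to arbitrary representations via the quasi-regular representations and Lemma~\ref{stable}.
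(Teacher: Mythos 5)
Your proof is correct, and its skeleton is the paper's: both arguments rest on the correspondence $\phi(f^\pi_{x,y})=\left<u_\pi x,y\right>$, on Lemma \ref{stable} to control invariant subspaces, and on Theorem \ref{auto} for automatic continuity; the fact that you orient the map as $\Hc(G)\to\T(G)$ with surjectivity as the hard step, while the paper builds $\Phi$ and $\Psi$ separately, is cosmetic. Two sub-arguments genuinely differ. First, for the well-definedness of $\phi_u$ on $\B'(G)$ the paper has a shorter trick: a relation $\sum_i\lambda_i f^{\pi_i}_{x_i,y_i}=0$ is packaged into the single representation $\pi=\oplus_i\pi_i$ with $x=\sum_i x_i$ and $y=\sum_i\lambda_i y_i$, so that $x\in(\pi(G)y)^\perp$, a closed $G$-invariant subspace preserved by $u_\pi$ by Lemma \ref{stable}, whence $\left<u_\pi x,y\right>=0$. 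Your route through isotypic blocks, trace pairings and the bicommutant theorem proves the stronger statement that matrix coefficients of inequivalent irreducibles are linearly independent; it is correct (granting the $\sigma$-weak continuity of $T\mapsto\operatorname{tr}(TS)$ for finite-rank $S$, needed to pass from the span of the $\pi_\lambda(g)$ to the full von Neumann algebra) but considerably heavier than necessary. Second, you are more careful than the paper about continuity of $\phi_u$ on all of $\B'(G)$: the paper invokes Proposition \ref{preli} and Theorem \ref{auto}, which a priori only control $\phi_u|_{\A}$, whereas you extend from $\A$ to some $\bar\phi\in\Hc(G)$ and then recover $u$ from $\bar\phi$ by evaluating on the generators $e_s$, i.e.\ on matrix coefficients of the quasi-regular representations $\lambda_{G/U}$, and propagating to all irreducibles via Lemma \ref{tensor} and Lemma \ref{stable}. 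This detour closes a small step the paper leaves implicit and costs little. The remaining points --- naturality and tensor compatibility from multiplicativity, the convolution and $*$ computations, and compact-to-Hausdorff giving a homeomorphism --- match the paper's proof.
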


\begin{proof}
    First, it is easily seen that composition is separately continuous in $\T(G)$ as it is in $\Li(\Hi)$ with the weak operator topology for every Hilbert space $\Hi$. Similarly, the involution $*$ of $\T(G)$ is also continuous.
    
    Next, we will define a map $\Phi \colon \T(G) \longrightarrow \Hc(G)$. Fix $u \in \T(G)$ in order to define a continuous linear multiplicative functional $\phi \colon  \Hilb(G) \longrightarrow \C$. For every $\pi\in \Rep(G)$ and $x,y\in \Hi_\pi$, denote by $f_{x,y}^\pi \colon g \longmapsto \left<\pi(g)x,y\right>$ the associated matrix coefficient. We first define $\phi$ on $\B'(G)$ by setting:
    \begin{equation}\label{dephi}
        \phi(f_{x,y}^\pi) = \left<u_\pi(x),y\right>.
    \end{equation}

    To see that it correctly defines a linear map $\B'(G) \longrightarrow \C$, fix $\pi_1,..., \pi_n \in \Rep(G)$ with underlying space $\Hi_1,...,\Hi_n$ respectively and let $x_1,y_1 \in \Hi_1,...,x_n,y_n\in \Hi_n$ and $\lambda_1,...,\lambda_n\in \C$ such that:
    \begin{equation}\label{link}
    \lambda_1f_{x_1,y_1}^{\pi_1} + ... + \lambda_nf_{x_n,y_n}^{\pi_n} = 0.
    \end{equation}
    Then, let $\pi = \oplus_{i\leqslant n} \pi_i, \  x= x_1+...+x_n$ and $y = \lambda_1y_1+ ...+ \lambda_n y_n$. Equation (\ref{link}) becomes: 
    $$\forall g \in G, \ 0= \left<\pi(g)x,y\right> = \left<x,\pi(g^{-1})y\right>,$$
    i.e. $x$ lies in $(\pi(G)y)^\perp$, a closed subspace of $\Hi$ stable under the action of $G$. Using Lemma \ref{stable}, we also have $u_{\pi}(x) \in (\pi(G)y)^\perp$ and even:
    \begin{equation*}    
        0 = \left<u_\pi(x),y\right>_\Hi =\lambda_1  \left<u_{\pi_1}(x_1),y_1\right>_{\Hi_1} + ... + \lambda_n \left<u_{\pi_n}(x_n),y_n\right>_{\Hi_n}.
    \end{equation*}
    This shows that $\phi$ is well defined on $\B'(G)$. It clearly is non-zero since $u$ is non-zero.
    
    It only remains to see that $\phi$ is multiplicative (since continuity is then automatic Proposition \ref{preli} and Theorem \ref{auto}). To that aim, fix $f_{x,y}^\pi, f_{x',y'}^{\pi'} \in \B'(G)$. We have:
    \begin{align*}
        \phi(f_{x,y}^\pi)\phi(f_{x',y'}^{\pi'}) 
        &= \left<u_\pi(x),y\right>\left<u_{\pi'}(x'),y'\right>\\
        &= \left<u_\pi(x)\otimes u_{\pi'}(x'), y\otimes y'\right>\\
        &= \left<u_{\pi}\otimes u_{\pi'}(x\otimes x'),y\otimes y'\right>\\
        &= \left<u_{\pi\otimes \pi'}(x\otimes x'),y\otimes y'\right>\\
        &=\phi(f_{x\otimes x', y\otimes y'}^{\pi\otimes \pi'})\\
        &=\phi(f_{x,y}^{\pi}f_{x',y'}^{\pi'}),
    \end{align*}
where we used the fact that $u \in \T(G)$ to obtain the fourth equality. Multiplicativity is proved hence $\Phi \colon \T(G) \longrightarrow \Hc(G)$ is well defined. The identity (\ref{dephi}) above also shows that this map is continuous as $\Hc(G)$ is endowed with the pointwise convergence (=weak*) topology.

To define the reciprocal $\Psi \colon  \Hc(G) \longrightarrow \T(G)$, fix $\phi \in \Hc(G)$ and $\pi \in \Rep(G)$. Then, the map
$$\Hi_\pi\times \Hi_\pi \longrightarrow \C, \qquad (x,y) \longmapsto \phi(f_{x,y}^\pi)$$
is a sesquilinear form. It is also bounded since for every $x,y\in \Hi$:
$$|\phi(f_{x,y}^\pi)|\leqslant ||\phi||\cdot ||f_{x,y}^\pi||_\infty \leqslant ||\phi||\cdot ||x||\cdot ||y||.$$
Using the Riesz representation theorem, there exists a unique bounded operator $u_\pi \in \Li(\Hi_\pi)$ such that:
\begin{equation}\label{cont}
    \forall x,y\in \Hi_\pi, \ \phi(f_{x,y}^\pi) = \left<u_\pi(x),y\right>.
\end{equation}
It is straightforward to show that $(u_\pi)_{\pi\in \Rep(G)}$ lies in $\T(G)$ and that (\ref{cont}) implies the continuity of the map $\Psi \colon \Hc(G) \longrightarrow \T(G)$. Equations (\ref{dephi}) and (\ref{cont}) together imply that those maps are inverse to each other, hence homeomorphisms.

To finish the proof, it only remains to show that the monoid structures are preserved. To see this, fix $u,v\in \T(G)$, and let $\pi \in \Rep(G)$. For every $x,y\in \Hi_\pi$ the following holds:
    \begin{align*}
        \Phi(u\circ v)(f_{x,y}^\pi) 
        &= \left<u_\pi\circ v_\pi(x),y\right>\\
        &=\Phi(u)(f_{v_\pi(x),y}^\pi)\\
        &= \Phi(u)\left[g\mapsto \left<v_\pi(x), \pi(g^{-1})y\right>\right]\\
        &=\Phi(u)\left[g\mapsto \Phi(v)(f_{x,\pi(g^{-1})y}^\pi)\right]\\
        &= \Phi(u) \left[g \mapsto \Phi(v)(g^{-1}\cdot f_{x,y}^\pi)\right]\\
        &= \Phi(u)* \Phi(v)(f_{x,y}^\pi).
    \end{align*}
\end{proof}

\section{Additional properties of \texorpdfstring{$\Hc(G)$}{H(G)}}\label{Conc}

We end this article by recovering previously known basic properties of $\Hc(G)$ and by turning the previous result into a proper duality, i.e a contra-variant functor that is full and faithful. 

First, recall that there is a natural map $\iota \colon G \longrightarrow \Hc(G)$ that sends $g\in G$ to the \textit{evaluation map at $g$}. Viewing $\Hc(G)$ as $\T(G)$, $\iota(g)$ is given by:
$$\forall \pi \in \Rep(G),\ \iota(g)_\pi = \pi(g)$$
Finally, viewing $\Hc(G)$ as $\Pa(\M_G)$ and identifying $G$ with $\Aut(\M_G)$ by Remark \ref{coset}, $\iota$ becomes the canonical inclusion $G \hookrightarrow \Pa(\M_G)$. It allows for an abstract reconstruction of $G$ from $\Hc(G)$:

\begin{theo}\label{recons}
    Let $G$ be a non-archimedean Roelcke-precompact Polish group. The natural map $$\iota \colon G \longrightarrow \Hc(G)$$ is a homeomorphic embedding such that:
    $$\forall g,h\in G, \ \iota(gh) = \iota(g) * \iota(h) \quad \& \quad \iota (g^{-1}) = \iota(g)^*.$$
    Moreover, $\iota(G)$ is exactly the set of invertible elements of $\Hc(G)$. In other words, $G$ is canonically isomorphic, as a topological group, to the set of invertible elements of $\Hc(G)$.
\end{theo}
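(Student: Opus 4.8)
The plan is to transport the entire statement along the isomorphism $\Hc(G) \cong \Pa(\M_G)$ of semi-topological $*$-monoids furnished by Theorem~\ref{id1}. As recorded just before the statement, under this identification the canonical map $\iota$ becomes the inclusion of $G = \Aut(\M_G)$ into $\Pa(\M_G)$, the product $*$ reads as composition where defined, and the involution $u \mapsto u^*$ reads as the relational inverse $\{(y,x) : (x,y) \in u\}$. Everything then reduces to a concrete assertion about partial automorphisms of the relational structure $\M_G$.

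For the homeomorphic embedding, I would argue that since $\Pa(\M_G)$ is by construction (Remark~\ref{topo}) the closure of $\Aut(\M_G)$ inside $K^K$ with $K = \M_G \cup \{\infty\}$, the subspace topology that $\iota$ induces on $G$ is exactly the topology of pointwise convergence on $\M_G$, i.e. the permutation-group topology, which is the original Polish topology on $G = \Aut(\M_G)$ by Remark~\ref{coset}(2); injectivity of $\iota$ is the faithfulness of the action on $\M_G$. Preservation of the algebraic operations is then immediate on the $\Pa(\M_G)$ side: for total automorphisms $g,h$ the composition where defined is ordinary composition, so $\iota(g)*\iota(h) = g\circ h = \iota(gh)$, and the relational inverse of an automorphism coincides with its group inverse, so $\iota(g)^* = g^{-1} = \iota(g^{-1})$.

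The substance of the theorem is the identification of the invertible elements of $\Pa(\M_G)$, whose monoid unit is $\operatorname{id}_{\M_G}$. One inclusion is clear: for $g \in \Aut(\M_G)$ the element $g^{-1} \in \Aut(\M_G) \subseteq \Pa(\M_G)$ is a two-sided inverse. For the converse, suppose $u \in \Pa(\M_G)$ admits $v \in \Pa(\M_G)$ with $u*v = \operatorname{id}_{\M_G} = v*u$. Computing the domain of the composition where defined, the equality $v\circ u = \operatorname{id}_{\M_G}$ forces $\dom(v\circ u) = \{x \in \dom(u) : u(x) \in \dom(v)\} = \M_G$, hence $\dom(u) = \M_G$ and $u$ is total; symmetrically $u\circ v = \operatorname{id}_{\M_G}$ gives $u(v(x)) = x$ for all $x$, so $u$ is surjective. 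A total, surjective partial isomorphism of a relational structure is a bijection preserving all relations, that is, an automorphism, so $u \in \Aut(\M_G) = \iota(G)$. Since $\iota$ is a homeomorphism and a semigroup isomorphism onto $\iota(G)$, the set of invertibles inherits from $\Hc(G)$ a topological-group structure matching that of $G$.

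I do not anticipate a serious obstacle once Theorem~\ref{id1} is available: the argument is essentially bookkeeping about partial maps. The only point that requires care is getting the two domain computations in the invertibility step exactly right, and in particular noting that \emph{both} one-sided identities are genuinely needed — totality comes from one and surjectivity from the other, and neither alone suffices, since $\M_G$ could a priori admit total but non-surjective self-embeddings, which lie in $\Pa(\M_G)$ yet are not invertible.
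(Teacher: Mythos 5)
Your proposal is correct and takes essentially the same route as the paper: identify $\Hc(G)$ with $\Pa(\M_G)$ via Theorem~\ref{id1}, note that $\Aut(\M_G)$ only sees the subbasic open sets of the first kind from Remark~\ref{topo} so the subspace topology is pointwise convergence, and check the algebraic claims directly on partial automorphisms. The paper compresses the remaining assertions into ``obviously satisfied''; your explicit verification that a two-sided invertible element of $\Pa(\M_G)$ must be total (from one identity) and surjective (from the other), hence an automorphism, correctly supplies the one step the paper leaves implicit.
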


\begin{proof}
    View $\Hc(G)$ as $\Pa(\M_G)$. The restriction of the topology of $\Pa(\M_G)$ to  $\Aut(\M_G)$ is easily seen to coincide with the pointwise convergence topology, since $\Aut(\M_G)$ does not see the open sets of the second kind in Remark \ref{topo}. Since we identified $G$ with $\Aut(\M_G)$ as topological groups in Remark \ref{coset}, our map is indeed a homeomorphic embedding. The other properties are obviously satisfied by the inclusion map $G \hookrightarrow \Pa(\M_G)$.
\end{proof}

Finally, we can reformulate the previous results as an equivalence of categories, or more precisely a duality. Let $\rpna$ be the category whose objects are the Roelcke-precompact non-archimedean Polish groups and arrows the continuous group morphisms. Note that if $p\colon G \longrightarrow H$ is a continuous group morphism between topological groups, it induces a functor $\widetilde{p} \colon \Rep(H) \longrightarrow \Rep(G)$ by setting $\widetilde{p}(\pi) = \pi\circ p$ for every $\pi \in \Rep(H)$ and $\widetilde{p}(h) = h$ for every morphism $T$ between representations of $H$. The functor $\widetilde{p}$ has additional properties, it is \textit{admissible} in the following sense:

\begin{defi}\label{admissible}
    Let $G,H$ be topological groups. A functor $F \colon  \Rep(H) \longrightarrow \Rep(G)$ is said to be \textit{admissible} if it satisfies the following properties: 

\begin{itemize}
    \item[(i)] $F$ sends the trivial one dimensional representation of $H$ to the trivial one dimensional representation of $G$.

    \item[(ii)] For every $\pi \in \Rep(H)$, $F(\pi)$ has the same underlying Hilbert space as $\pi$.
    
    \item[(iii)] For every $\pi,\pi' \in \Rep(H)$ and $h \in \Hom(\pi,\pi')$, $F(h) = h$.

    \item[(iv)] $F$ commutes with tensor product of representations, i.e. for every $\pi, \pi' \in \Rep(G)$,
    $$F(\pi \otimes \pi') = F(\pi) \otimes F(\pi').$$
\end{itemize}

\end{defi}

The collection of such functors is stable under composition and contains the identity maps. We can thus form the category $\CR$ whose class of objects is $(\Rep(G))_{G\in \rpna}$ and whose arrows are the admissible functors. We obtain the following, which is also true for compact (resp. locally compact abelian) groups by the usual duality theories of Pontryagin--van Kampen and Tannaka-Krein.

\begin{theo}
    The canonical contravariant functor 
    \begin{align*}
    \Rep\colon \rpna &\longrightarrow \CR\\
    G &\longmapsto \Rep(G) \\
    p &\longmapsto \widetilde{p}
    \end{align*} 
is a duality.
\end{theo}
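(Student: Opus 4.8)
The statement asserts that $\Rep$ is both faithful and full, so the plan is to treat these two properties separately. For \textbf{faithfulness}, suppose $p,q\colon G\to H$ are continuous group morphisms with $\widetilde p=\widetilde q$. Then for every $\pi\in\Rep(H)$ the representations $\pi\circ p$ and $\pi\circ q$ of $G$ coincide, that is $\pi(p(g))=\pi(q(g))$ for all $g\in G$. Applying this to all irreducible $\pi$, which belong to $\Rep(H)$ and jointly separate the points of $H$ since $\Lambda_H$ is faithful (Fact \ref{oligorep}), I conclude $p=q$.

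For \textbf{fullness}, let $F\colon\Rep(H)\to\Rep(G)$ be an admissible functor. The key idea is to read off, for each fixed $g\in G$, an element of the Tannaka monoid of $H$. Concretely, set $u^g=(F(\pi)(g))_{\pi\in\Rep(H)}$; since $F$ takes values in $\Rep(G)$, each $F(\pi)$ is a continuous unitary representation of $G$, so every $F(\pi)(g)$ is a unitary operator on $\Hi_\pi$ (the same space as $\pi$ by property (ii) of Definition \ref{admissible}). I claim $u^g\in\T(H)$. Naturality is exactly the assertion that each intertwiner $h\in\Hom(\pi_1,\pi_2)$ between objects of $\Rep(H)$ commutes with $F(\pi_1)(g)$ and $F(\pi_2)(g)$; this holds because $F(h)=h$ (property (iii)) is an intertwiner of the $G$-representations $F(\pi_1),F(\pi_2)$. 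Compatibility with tensor products and the normalization $u^g_{\triv}=\operatorname{Id}_\C$ follow from properties (iv) and (i), and $u^g$ is non-zero since its coordinates are unitary.

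Next I exploit the multiplicative structure: because each $F(\pi)$ is a homomorphism sending the identity to the identity, one has $u^g\circ u^{g'}=u^{gg'}$ and $u^e=\operatorname{Id}_{\T(H)}$, so each $u^g$ is invertible in $\T(H)$ with inverse $u^{g^{-1}}$. Transporting this through the isomorphism $\T(H)\cong\Hc(H)$ of Section \ref{Tannaka} and invoking Theorem \ref{recons}, whose content is precisely that the invertibles of $\Hc(H)$ are the image $\iota_H(H)$, I obtain a unique element $p(g)\in H$ with $u^g=(\pi(p(g)))_{\pi}$, that is $F(\pi)(g)=\pi(p(g))$ for every $\pi$; uniqueness uses that $\iota_H$ is injective. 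The relation $u^g\circ u^{g'}=u^{gg'}$ then forces $p$ to be a group morphism, and by construction $\widetilde p(\pi)=\pi\circ p=F(\pi)$ on objects while $\widetilde p(h)=h=F(h)$ on morphisms, so $\widetilde p=F$.

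It remains to check that $p$ is continuous, which is where the topologies must be matched: the coordinate maps $g\mapsto F(\pi)(g)$ are continuous for the weak operator topology because the $F(\pi)$ are continuous representations, so $g\mapsto u^g$ is continuous into $\T(H)$, hence into $\Hc(H)$ and into the invertibles $\iota_H(H)$; composing with the inverse of the homeomorphism $\iota_H\colon H\to\iota_H(H)$ from Theorem \ref{recons} yields continuity of $p$. I expect the crux of the argument to be not any single computation but the structural recognition that an admissible functor is, pointwise in $g$, an invertible element of the Tannaka monoid; once Theorem \ref{recons} and the identification $\T(H)\cong\Hc(H)$ are available, everything else is bookkeeping.
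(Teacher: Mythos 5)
Your proof is correct and follows essentially the same route as the paper: faithfulness via the fact that irreducible representations separate points, and fullness by transporting the admissible functor $F$ into the Tannaka monoid and invoking Theorem \ref{recons} to identify the invertibles of $\Hc(H)$ with $\iota_H(H)$. The only (cosmetic) difference is that you define the induced map only on group elements $g\longmapsto u^g$, whereas the paper first builds a monoid morphism $p_F\colon \T(G)\to\T(H)$ on all of $\T(G)$ and then restricts it to the invertibles.
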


\begin{proof}
It is clear by construction that $\Rep$ is surjective with regard to objects. To prove that it is faithful, let $G,H \in~ \rpna$ and assume that $p,q \colon G \longrightarrow H$ are such that $\widetilde{p} = \widetilde{q}$. Then, for every $\pi \in \Rep(G)$, we have $\pi\circ p = \widetilde{p}(\pi) = \widetilde{q}(\pi) = \pi\circ q $. Since Roelcke-precompact Polish groups satisfy the Gel'fand-Raikov Theorem (i.e. irreducible representations separate points \cite[Th.1.1]{Tsankov_2012}), it implies that $p=q$.

Finally, we prove that $\Rep$ is full. Let $G,H\in \rpna$ and let $F \colon \Rep(H) \longrightarrow \Rep(G)$ be an admissible functor. Let us prove that there exists $p\colon G \longrightarrow H$ such that $F = \widetilde{p}$. First, we show that $F$ induces a map $p_F\colon \T(G) \longrightarrow \T(H)$. Indeed, let $u\in \T(G)$ and $\pi \in \Rep(H)$. Since $F$ is admissible, we can define $p_F(u)$ at $\pi$ by setting $p_F(u)_{\pi} = u_{F(\pi)}$. Now, let $\pi, \pi' \in \Rep(H)$ and let $h\in \Hom(\pi,\pi')$. Since $F$ is a functor, $F(h)$ lies in $\Hom(F(\pi), F(\pi'))$. Hence, by definition of $\T(G)$, the following diagram is commutative:

\begin{center}
\begin{tikzcd}[column sep=large, row sep=large]
    \Hi_{F(\pi)} \arrow[r, "u_{F(\pi)}"] \arrow[d, "F(h)"'] &\Hi_{F(\pi)} \arrow[d, "F(h)"]\\
    \Hi_{F(\pi')} \arrow[r, "u_{F(\pi')}"] & \Hi_{F(\pi')}
\end{tikzcd}
\end{center}
Since $F$ is admissible, the above diagram is \textit{equal} to the following:

\begin{center}
\begin{tikzcd}[column sep=large, row sep=large]
    \Hi_{\pi} \arrow[r, "p_F(u)_{\pi}"] \arrow[d, "h"'] &\Hi_{\pi} \arrow[d, "h"]\\
    \Hi_{\pi'} \arrow[r, "p_F(u)_{\pi'}"] & \Hi_{\pi'}
\end{tikzcd}
\end{center}
Thus, in order to prove to $p_F(u)$ is an element of $\T(H)$, it only remains to see that $p_F(u)$ respects tensor products. By admissibility of $F$: 
$$p_F(u)_\triv = u_{F(\triv)} = u_\triv = \operatorname{Id}_\C,$$
and, for every $\pi,\pi' \in \Rep(H)$, using the fact that $u$ is in $\T(G)$,
$$p_F(u)_{\pi\otimes \pi'} = u_{F(\pi\otimes \pi')} = u_{F(\pi)\otimes F(\pi')} = u_{F(\pi)} \otimes u_{F(\pi')} = p_F(u)_\pi \otimes p_F(u)_{\pi'}.$$
Hence $p_F(u)$ is an element of $\T(H)$. Clearly, $p_F$ is continuous and respects the monoid structures. Recalling the last statement of Theorem \ref{recons}, we see that $p_F$ restricts to a continuous group morphism $p \colon G \longrightarrow H$ that induces $F$.
\end{proof}

\bibliographystyle{amsalpha}
\bibliography{bib.bib}

\end{document}